\documentclass[12pt]{amsart}
\usepackage{graphicx}
\usepackage{amsmath,amssymb}
\hoffset=-2cm
\setlength{\textwidth}{16.3cm}
\setlength{\textheight}{22.5cm}

\newtheorem{theorem}{Theorem}[section]

\newtheorem{lemma}[theorem]{Lemma}

\newtheorem{corollary}[theorem]{Corollary}

\theoremstyle{remark}
\newtheorem{remark}[theorem]{Remark}

\newtheorem{definition}[theorem]{Definition}

\numberwithin{equation}{section}

\begin{document}

\title[
Rectangular Seifert circles and arcs system]{
Rectangular Seifert circles and arcs system}

\author{Tatsuo Ando,
Chuichiro Hayashi
and Miwa Hayashi}

\date{\today}

\thanks{The second author is partially supported
by JSPS KAKENHI Grant Number 25400100.}

\begin{abstract}
 Rectangular diagrams of links
are link diagrams in the plane ${\mathbb R}^2$
such that
they are composed of vertical line segments and horizontal line segments
and vertical segments go over horizontal segments at all crossings.
 P. R. Cromwell and I. A. Dynnikov showed
that rectangular diagrams of links are useful 
for deciding whether a given link is split or not,
and whether a given knot is trivial or not.
 We show in this paper 
that an oriented link diagram $D$
with $c(D)$ crossings and $s(D)$ Seifert circles
can be deformed by an ambient isotopy of ${\mathbb R}^2$
into a rectangular diagram 
with at most $c(D) + 2 s(D)$ vertical segments,
and that, if $D$ is connected, at most $2c(D)+2-w(D)$ vertical segments,
where $w(D)$ is a certain non-negative integer.

 In order to obtain these results,
we show that the system of Seifert circles and arcs substituting for crossings
can be deformed by an ambient isotopy of ${\mathbb R}^2$
so that Seifert circles are 
rectangles composed 
of two vertical line segments and two horizontal line segments
and arcs are vertical line segments,
and that
we can obtain a single circle from a connected link diagram
by smoothing operations at the crossings regardless of orientation.
\end{abstract}

\keywords{
link diagram; 
rectangular diagram; 
arc presentation; 
rectangular Seifert circles and arcs system; 
number of crossings
}


\maketitle

\section{Introduction}\label{sect:introduction}

 Birman and Menasco introduced arc-presentation of links in \cite{BM},
and Cromwell formulated it in \cite{C}.
 Dynnikov pointed out in \cite{D1} and \cite{D2} 
that Cromwell's argument in \cite{C} almost shows 
that any arc-presentation of a split link
can be deformed into one which is $\lq\lq$visibly split"
by a finite sequence of elementary moves
which do not changes number of arcs of arc-presentations. 
 He also showed
that any arc-presentation of the trivial knot
can be deformed into the trivial one with only two arcs
by a finite sequence of merge elementary moves 
without increasing number of arcs.
 Since there are only finitely many arc-presentations
with a fixed number of edges,
these results give finite algorithms for the decision problems.
 As is shown in page 41 in \cite{C},
an arc-presentation is almost equivalent to a rectangular diagram.

\begin{figure}[htbp]
\begin{center}
\includegraphics[width=45mm]{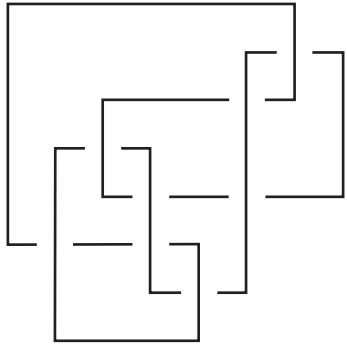}
\end{center}
\caption{A rectangular diagram of the trivial knot with $8$ vertical edges}
\label{fig:TrivialKnot8arcs}
\end{figure}

 A {\it rectangular diagram} of a link
is a link diagram in the plane ${\mathbb R}^2$
which is composed of vertical line segments and horizontal line segments 
such that no pair of vertical line segments are colinear,
no pair of horizontal line segments are colinear, 
and the vertical line segment passes over the horizontal line segment
at each crossing. 
 See Figure \ref{fig:TrivialKnot8arcs}.
 These vertical line segments and horizontal line segments 
are called {\it edges} of the rectangular diagram.
 Every rectangular diagram
has the same number of vertical edges and horizontal edges.
 It is known that every link has a rectangular diagram 
(Proposition in page 42 in \cite{C}).

 In \cite{HK}, A. Henrich and L. Kauffman announced
an upper bound of the number of Reidemeister moves
needed for unknotting (Theorem 8)
by applying Dynnikov's theorem to rectangular diagrams.  
 Let $D$ be an oriented link diagram in Morse form,
and $c(D)$, $b(D)$ the numbers of crossings and  maxima.
 Lemma 2 in \cite{HK} states 
that we can obtain a rectangular diagram 
with at most $2b(D)+c(D)$ vertical edges
from $D$ by an ambient isotopy of the plane ${\mathbb R}^2$.
 In this paper,
we consider the number of Seifert circles $s(D)$
instead of that of maxima.
 Note that $s(D)$ does not change under isotopy of ${\mathbb R}^2$.
 We obtain an estimation depending only on $c(D)$, too.

\begin{figure}[htbp]
\begin{center}
\includegraphics[width=120mm]{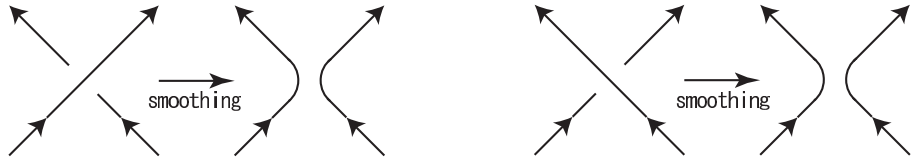}
\end{center}
\caption{smoothing operation}
\label{fig:smoothing}
\end{figure}

 Let $D$ be an oriented link diagram in the plane ${\mathbb R}^2$.
 If we perform smoothing operations at all the crossings
as shown in Figure \ref{fig:smoothing}, 
then we obtain a disjoint union of oriented circles in ${\mathbb R}^2$
as in Figure \ref{fig:Seifert},
which we call {\it Seifert circles}.
 This operation is introduced by Seifert in \cite{S}
to construct an orientable surface spanning a knot.

\begin{figure}[htbp]
\begin{center}
\includegraphics[width=80mm]{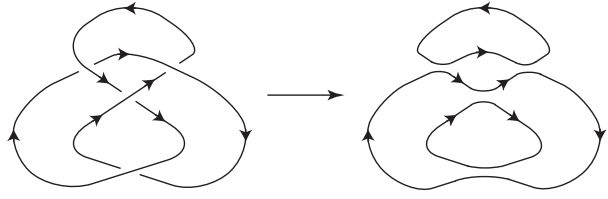}
\end{center}
\caption{Seifert circles}
\label{fig:Seifert}
\end{figure}

 A crossing $x$ of a link diagram $D$ in the plane ${\mathbb R}^2$
is called {\it nugatory}
if there is a circle $C$ in ${\mathbb R}^2$
which intersects $D$ only in a single point at $x$.
 See Figure \ref{fig:nugatory}.
 Link diagrams with a nugatory crossing are often left out from consideration
since we can get rid of a nugatory crossing
by rotating the part of the link inside $C$ through $180^{\circ}$.

\begin{figure}[htbp]
\begin{center}
\includegraphics[width=70mm]{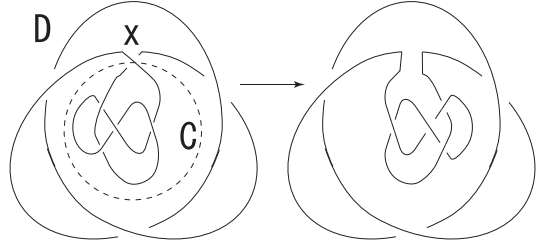}
\end{center}
\caption{a nugatory crossing}
\label{fig:nugatory}
\end{figure}

\begin{theorem}\label{theorem:c+2s}
 Let $D$ be an oriented link diagram in the plane ${\mathbb R}^2$,
and $c(D)$, $s(D)$ the numbers of crossings and Seifert circles of $D$
respectively.
 Then an adequate ambient isotopy of  ${\mathbb R}^2$
deforms $D$ into a rectangular diagram 
with at most $c(D) + 2 s(D)$ vertical edges. 
 If $c(D) \ge 1$ and $D$ has no nugatory crossings,
then $c(D) + 2 s(D) -2$ vertical edges are enough.
\end{theorem}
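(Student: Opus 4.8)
The plan is to deduce the theorem from the structural result quoted in the abstract. After an ambient isotopy of $\mathbb{R}^2$, the system made of the Seifert circles of $D$ together with the arcs substituting for the crossings becomes a system $\Sigma$ in which each Seifert circle is a rectangle bounded by two vertical and two horizontal segments, and each crossing arc is a single vertical segment. Such a $\Sigma$ has exactly $2s(D)$ vertical segments coming from the $s(D)$ rectangles and $c(D)$ vertical segments coming from the $c(D)$ crossing arcs, hence $2s(D)+c(D)$ in all; the number of horizontal segments is irrelevant. What remains is to recover $D$ from $\Sigma$ — to reverse the smoothings — by an ambient isotopy of the plane that keeps everything rectangular and does not increase the number of vertical segments.

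Since the Seifert circles lie in an arbitrarily small neighbourhood of $D$ and the crossing arcs lie in arbitrarily small neighbourhoods of the crossings, the ambient isotopy furnished by the structural result carries $D$ itself to a diagram $D'$ that coincides with $\Sigma$ outside small disks around the endpoints of the crossing arcs and that, inside each such disk, is the standard local picture of a crossing whose orientation-respecting smoothing, decorated by the crossing arc, is the corresponding piece of $\Sigma$. So $D'$ is already rectangular away from these disks, and the problem is entirely local. Near a crossing arc $\alpha$ we see a short vertical segment joining a point $p$ on a horizontal edge of one Seifert rectangle to a point $q$ on a horizontal edge of another, say with $p$ above $q$. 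To re-create the crossing by an ambient isotopy that stays within rectangular form, one pushes a small rectangular finger of the first rectangle downward past $\alpha$ and a small finger of the second rectangle upward past $\alpha$, and then amalgamates the vertical side of the first finger, $\alpha$, and the vertical side of the second finger into one long vertical segment which one routes so as to pass over the horizontal edge reassembled from the two fingers; this is exactly the inverse of the smoothing in Figure \ref{fig:smoothing}. Producing the two fingers adds two vertical segments and the amalgamation removes two, so the vertical count is unchanged at each crossing arc. Doing this at all $c(D)$ crossing arcs and finishing with a tiny perturbation to destroy colinearities of parallel edges yields a rectangular diagram of $D$ with at most $2s(D)+c(D)$ vertical edges, which is the first assertion.

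For the improved bound when $c(D)\ge 1$ and $D$ has no nugatory crossing, one is more economical at a single, well-chosen crossing. If the layout of $\Sigma$ can be arranged so that one crossing arc $\alpha$ meets each of its two Seifert rectangles near a corner, with the vertical edge of the rectangle already present at that corner playing the role of the finger, then no fingers need be created at $\alpha$ and its resolution deletes two vertical segments outright, giving $2s(D)+c(D)-2$. The absence of nugatory crossings is what makes such a crossing and such a layout available: a nugatory crossing is precisely the degenerate case in which a Seifert circle together with its crossing arc forms an isolated kink, for which this saving cannot be realized.

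The step I expect to be the main obstacle is this local bookkeeping together with the previous paragraph: one must check with care that the finger-and-amalgamation move is genuinely realized by an ambient isotopy of the plane rather than by a cut-and-paste, and that it genuinely leaves the vertical count unchanged, and one must pin down exactly which crossing and which rectangular layout of $\Sigma$ make the corner economy possible and verify that the no-nugatory hypothesis indeed guarantees it. Transporting $D$ along the isotopy and the final general-position perturbation are routine once the local pictures are fixed.
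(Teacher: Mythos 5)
Your argument for the first bound follows essentially the paper's route: rectangularize the Seifert circles and arcs system (Corollary \ref{corollary:RSCAS}), then re-create each crossing locally so that the final count is $2s(D)+c(D)$; your ``finger-and-amalgamation'' move is, up to bookkeeping, the paper's replacement of each arc by a vertical overpass plus a three-segment underpass followed by two straight merges (Lemma \ref{lemma:s-merge}), so this part is sound, if informal.

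The genuine gap is in the improved bound $c(D)+2s(D)-2$. You propose to realize the entire saving of two at a single, well-chosen crossing, by arranging the rectangular system so that its arc meets each of its two Seifert rectangles ``near a corner'', and you explicitly leave unverified both that such a crossing and layout exist and that the no-nugatory hypothesis guarantees them. That existence claim is precisely the hard point, and it is doubtful as stated: to absorb the overpass into a vertical side of the upper rectangle you need the subsegment of that rectangle's horizontal edge between the crossing and the corner to be free of other arc endpoints and the sweeping merge to be unobstructed by other circles, and you need the same condition simultaneously on the lower rectangle; nothing forces the existence of one crossing that is extreme on both of its circles at once with both sweeping regions clear. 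Your heuristic for why the no-nugatory hypothesis helps is also inaccurate: in a Seifert circles and arcs system no arc has both endpoints on the same circle (as the paper notes), so a nugatory crossing is not ``a Seifert circle with its arc forming an isolated kink''; it corresponds to a cut arc of the system. The paper uses the hypothesis only to conclude $c(D)\ge 2$, hence that the leftmost and rightmost arcs are distinct, and then saves one vertical edge at each of these two arcs: at an extreme arc the relevant horizontal subsegment is automatically free of endpoints, and obstructing nested circles are dealt with by shrinking them and performing a ``generalized merge'' (Figure \ref{fig:MergesAtLeftmostArc}, Case (3)). Without an argument of that kind --- either producing your doubly economical crossing or, as in the paper, distributing the saving over the two extreme arcs with a case analysis for the obstructions --- the second assertion remains unproved.
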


 A link diagram $D$ in the plane ${\mathbb R}^2$ 
is said to be {\it connected}
if it is connected 
when the underpasses are restored at all the crossings. 

 An {\it undirected smoothing operation} at a crossing 
is a smoothing operations neglecting orientation of a ink.
 It may or may not respect the orientation of the link
when we orient the link.
 By an adequate undirected smoothing operations,
as shown in Figure \ref{fig:RMCAS},
we obtain a single circle from any connected link diagram (Lemma \ref{lemma:monadic}).
 This leads to the next theorem.

\begin{theorem}\label{theorem:2c}
 Let $D$ be a connected link diagram in ${\mathbb R}^2$,
and $c(D)$ the number of crossings of $D$.
 Then $D$ can be deformed 
into a rectangular diagram with at most $2c(D) - w(D)+2$ vertical edges
by an adequate ambient isotopy of ${\mathbb R}^2$,
where $w(d)$ is a non-negative integer defined as the width of $D$ as below.
\end{theorem}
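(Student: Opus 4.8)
The plan is to replay the proof of Theorem~\ref{theorem:c+2s} with Seifert's oriented smoothing replaced by the undirected smoothings supplied by Lemma~\ref{lemma:monadic}. First, using Lemma~\ref{lemma:monadic} (and Figure~\ref{fig:RMCAS}), choose a family of undirected smoothing operations, one at each crossing, that collapses $D$ to a single circle $C$. Re-inserting at each crossing $x$ the small arc $a_x$ that substitutes for it, we obtain a circles-and-arcs system: the single circle $C$ together with the $c(D)$ pairwise disjoint arcs $a_x$, from which $D$ is recovered by resolving each $a_x$ into a crossing. This system is of the same combinatorial type as a Seifert-circles-and-arcs system --- a finite family of disjoint circles with disjoint arcs attached at their endpoints --- so the rectangularization result announced in the abstract (and proved earlier in the paper) applies: an ambient isotopy of ${\mathbb R}^2$ moves it to a position in which $C$ is a rectangle, built from two vertical and two horizontal edges, and every $a_x$ is a vertical segment.

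The step that has no counterpart in the oriented case is the last one: resolving each vertical segment $a_x$ back into the crossing $x$ \emph{within a rectangular diagram}, i.e.\ with the vertical strand over the horizontal strand at every crossing. In the oriented setting the coherence built into Seifert's smoothing lets one carry out all $c(D)$ resolutions with a single vertical edge apiece; here, for each $x$ the resolution is forced to put one specific strand on top, and this matches the over/under datum of $D$ at $x$ for some crossings --- call them \emph{coherent} --- but not for the rest. A coherent crossing costs just the one vertical edge $a_x$; a non-coherent one is handled by rerouting $a_x$ as a path built from two vertical edges and a horizontal connector, arranged so that after resolution the correct strand lies on top. The width $w(D)$ (defined precisely below) is, for the purposes of this estimate, the largest number of crossings that can be made coherent over all smoothing families collapsing $D$ to a single circle. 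Counting vertical edges gives $2$ for $C$, one for each of the $w(D)$ coherent crossings, and two for each of the remaining $c(D) - w(D)$ crossings: a total of $2 + w(D) + 2\bigl(c(D) - w(D)\bigr) = 2c(D) - w(D) + 2$.

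I expect the main obstacle to be this last step: one must produce a local normal form at each crossing showing that a non-coherent arc can always be rerouted using exactly two vertical edges without violating the rectangular conditions --- no two vertical edges colinear, no two horizontal edges colinear, and vertical-over-horizontal at every crossing, both the old ones and those created by the reroutings --- and then check that the local reroutings are mutually compatible, so that they can be carried out simultaneously. Two smaller points must also be settled: that the rectangularization lemma genuinely covers circles-and-arcs systems coming from undirected (not just Seifert) smoothings, which should be routine since that lemma uses only the disjoint-circles-plus-disjoint-arcs data; and that the freedom in choosing the smoothing family can be exploited so that the number of coherent crossings actually reaches the width $w(D)$ as defined below.
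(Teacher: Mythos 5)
There is a genuine gap, and it occurs exactly at the two places where your plan departs from the paper. First, after making the system rectangular you assert that ``every $a_x$ is a vertical segment.'' For a monadic circles and arcs system this is false in general and is not what Theorem \ref{theorem:RCAS} (or Corollary \ref{corollary:RMCAS}) gives: since undirected smoothings produce arcs with both endpoints on the single circle $C$, the conclusion is only that each arc is of type I \emph{or} of type $\sqcup$. An arc attached to the outside of $C$, or an arc nested inside other arcs in a way incompatible with a single transversal, can never be made a vertical segment; the maximal number of arcs that can simultaneously be of type I is precisely what the ruler-based width $w(S)$ measures, and this is why $w(D)$ enters the bound at all. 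Second, your dichotomy ``coherent versus non-coherent'' crossings, defined by whether the forced vertical-over-horizontal convention matches the over/under datum of $D$ at $x$, is not the right one and does not match the paper's definition of $w(D)$ (the theorem explicitly uses the width defined via rulers). In the paper's reconstruction (Figure \ref{fig:restore}) there is no coherence obstruction: at every restored crossing the strand chosen as overpass is realized by vertical segments and the underpass is rerouted as two vertical segments and one horizontal segment, and this works for either choice of overpass. The cost difference between crossings comes solely from the planar type of the arc: a type I arc yields $3$ vertical segments, a type $\sqcup$ arc yields $4$, and after two straight merge operations (Lemma \ref{lemma:s-merge}) per crossing these become $1$ and $2$ respectively. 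So your $w(D)$ is a different quantity, and your rerouting step, which you yourself flag as the main obstacle, is neither carried out nor aimed at the statement actually being proved.

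For comparison, the paper's argument is: choose a monadic smoothing system $S=C\cup A$ with $w(S)=w(D)$ (Lemma \ref{lemma:monadic}), choose a ruler $\beta$ of maximal length, and isotope $S\cup\beta$ so that $\beta$ is horizontal, $C$ is the rectangular boundary of a tubular neighbourhood of $\beta$, the $w(D)$ arcs meeting $\beta$ are vertical segments, and the remaining arcs consist of two vertical and one horizontal segment. Restoring the crossings gives $2+3w(D)+4\bigl(c(D)-w(D)\bigr)=2+4c(D)-w(D)$ vertical segments; after a small perturbation and two straight merges per crossing one obtains a rectangular diagram with $2+2c(D)-w(D)$ vertical edges. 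Note that this route does not need the maximality of the number of type I arcs to be re-derived: the ruler itself organizes the isotopy, which is the ingredient missing from your proposal.
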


%


 In order to show Theorems \ref{theorem:c+2s} and \ref{theorem:2c}, 
we observe a system of circles obtained 
by performing smoothing operations 
at all the crossings of a link diagram
and arcs corresponding to crossings.

\begin{figure}[htbp]
\begin{center}
\includegraphics[width=110mm]{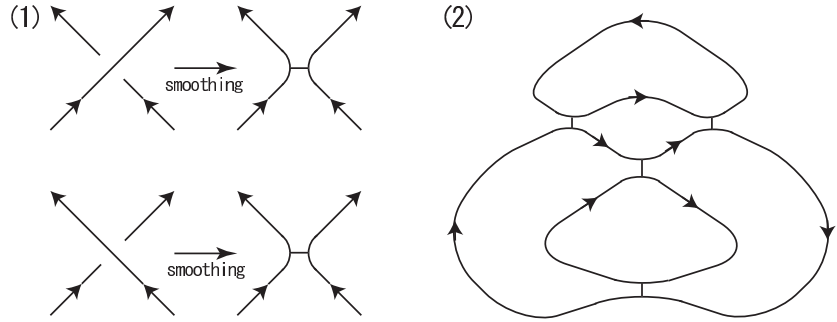}
\end{center}
\caption{Seifert circles and arcs system}
\label{fig:smoothing2}
\end{figure}

 After we perform smoothing operations at all the crossings,
we place a line segment connecting Seifert circles
as a substitute for each crossing as shown in Figure \ref{fig:smoothing2} (1).
 Then we obtain a union of circles and arcs,
which we call {\it Seifert circles and arcs system}.
 In Figure \ref{fig:smoothing2} (2),
the one which is obtained
from the knot diagram in Figure \ref{fig:Seifert} is depicted.

 Note that Seifert circles and arcs system does not have an arc
with its both endpoints in the same Seifert circle. 
(Otherwise, we would have a contradiction on orientation of the link.)
 Moreover, 
the orientations of the two circles containing the endpoints of an arc
are both clockwise or both anti-clockwise
if and only if one circle is contained in the disk bounded by the other.

%

 When we apply undirected smoothing operations,
we obtain a system of circles and arcs,
where circles do not have orientations,
and there may be an arc 
which has its both endpoints in the same circle.

 In general,
let $C$ be a disjoint union of circles in the plane ${\mathbb R}^2$,
and $A$ a disjoint union of arcs in ${\mathbb R}^2$
such that $A \cap C = \partial A$,
where $\partial A$ denotes the set of endpoints of arcs of $A$.
 Then we call the union $C \cup A$ 
{\it circles and arcs system} in ${\mathbb R}^2$.

\begin{figure}[htbp]
\begin{center}
\includegraphics[width=80mm]{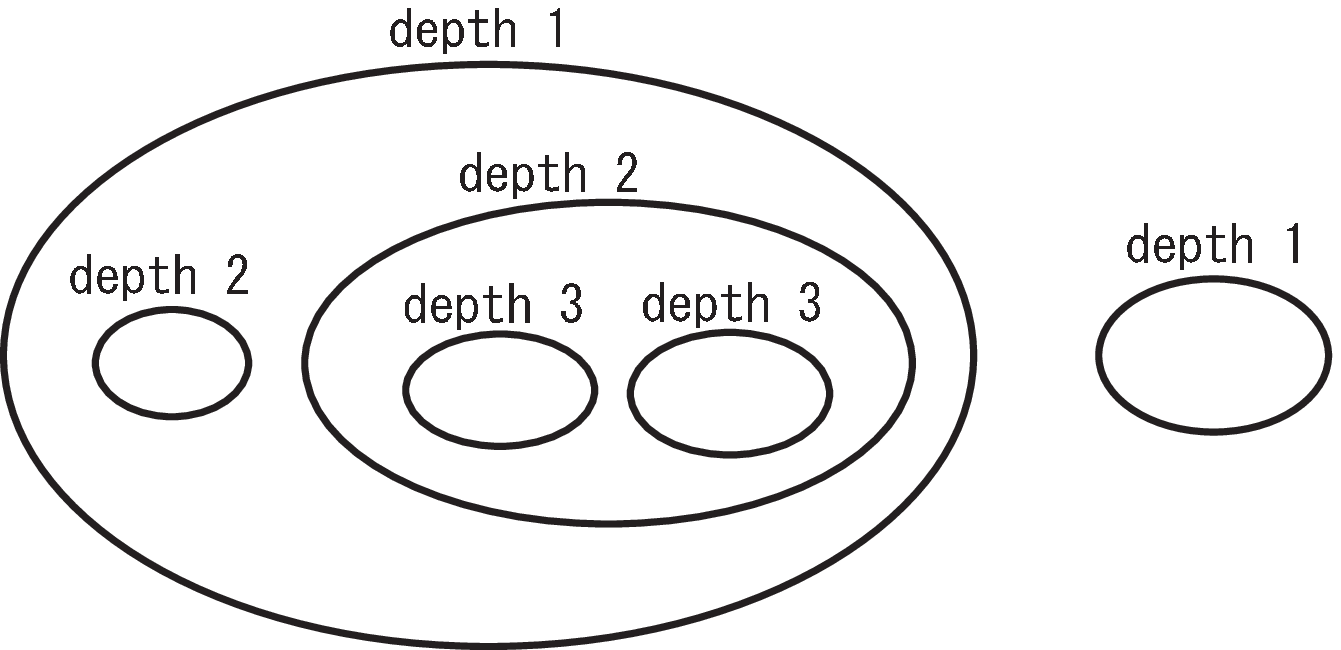}
\end{center}
\caption{depth}
\label{fig:depth}
\end{figure}

 Let $S = C \cup A$ be a circles and arcs system in ${\mathbb R}^2$.
 We divide ${\mathbb R}^2$ into regions by the circles of $C$.
 A circle $Z$ of $C$ is said to be of {\it depth} $1$
if $Z$ is contained in the boundary of the infinitely large region,
and of {\it depth} $i$
if $Z$ is contained in the boundary of a region $R$
such that its boundary $\partial R$ contains a single circle, say $Z'$, of depth $i-1$
and $R$ is inside $Z'$.
 See Figure \ref{fig:depth}.

 Every circles and arcs system can be moved to be $\lq\lq$beautiful"
as shown in the next theorem.
 Let $\pi_x: {\mathbb R}^2 \ni (x,y) \mapsto (x,0) \in {\mathbb R}^2$
and $\pi_y: {\mathbb R}^2 \ni (x,y) \mapsto (0,y) \in {\mathbb R}^2$
be projections.
 We say subsets $A$ and $B$ of ${\mathbb R}^2$
{\it overlap each other under $\pi_x$ (resp. $\pi_y$)}
if $\pi_x(A) \cap \pi_x(B) \ne \emptyset$ (resp. $\pi_y(A) \cap \pi_y(B) \ne \emptyset$).

\begin{theorem}\label{theorem:RCAS}
 Let $C \cup A$ be a circles and arcs system in ${\mathbb R}^2$,
where $C$ is the union of circles and $A$ the union of arcs.
 Then it can be deformed by an ambient isotopy of ${\mathbb R}^2$
so that 
(1) circles are rectangles composed 
of two vertical line segments and two horizontal line segments, and
(2) each arc either (a) is a vertical line segment,
or 
(b) has its both endpoints in the same circle
and is composed of three line segments:
one horizontal line segment $s$
and two vertical line segments in the same side of $s$.
 Moreover, 
the isotopy can be taken
so that no pair of rectangular circles of the same depth 
overlap each other under $\pi_y$.
\end{theorem}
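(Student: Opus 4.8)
The plan is to induct on the maximal depth $d$ of the circles of $C$, organised by the nesting forest of $C$ (in which $Z'$ is a child of $Z$ when $Z'$ lies in the disk bounded by $Z$ and no other circle of $C$ separates the two). Since $A\cap C=\partial A$, every arc of $A$ lies in the closure of a single region of ${\mathbb R}^2\setminus C$; writing $R(Z)$ for the region inside $Z$ but outside every child of $Z$, and $R_\infty$ for the unbounded region, one checks --- using that an arc cannot cross a circle --- that the arcs lying in $\overline{R(Z)}$ join $Z$ and the children of $Z$ to one another, and that the arcs in $\overline{R_\infty}$ join depth-$1$ circles to one another.

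For the inductive step I fix a depth-$1$ circle $Z$ and let $S_Z^{\circ}$ be the circles and arcs system formed by the circles of $C$ strictly inside $Z$ together with the arcs of $A$ that lie in the disk bounded by $Z$ and do not meet $Z$. Then $S_Z^{\circ}$ has depth at most $d-1$, so by the induction hypothesis it may be assumed already in the required form; in particular the children of $Z$, being the depth-$1$ circles of $S_Z^{\circ}$, appear as rectangles with vertical and horizontal sides and with pairwise disjoint $y$-projections, so they may be taken stacked one above another and each as thin in the $x$-direction as I please. I surround this configuration by a large rectangle representing $Z$ and draw the arcs still missing, namely those in $\overline{R(Z)}$ that meet $Z$: an arc joining $Z$ to a child becomes a vertical segment joining $Z$ to that child, placed at an $x$-coordinate that avoids the thin children that would otherwise be in the way and the vertical arcs already drawn; an arc with both endpoints on $Z$ becomes an inner staple, drawn so that it encloses precisely the children it enclosed originally, which can be arranged by taking $Z$ and the nested rectangles inside it wide enough and by listing the children in the cyclic order of the arc-endpoints along $\partial Z$. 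Performing this for every depth-$1$ circle rectangularises the disk bounded by each of them; I then stack these depth-$1$ rectangles vertically --- which, together with the induction hypothesis on each $S_Z^{\circ}$, secures the ``moreover'' clause at every depth --- and route the arcs in $\overline{R_\infty}$ in the same fashion, self-arcs now becoming outer staples. The base case $d=1$ is the special case of this last step in which each of the disks in question is already a rectangle.

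The content shared by all of these stages is a single routing problem, which I would isolate as a lemma: given pairwise disjoint rectangles (with vertical and horizontal sides) $P_1,\dots,P_m$ stacked vertically in a region $R$ --- a rectangle, or the plane, with the $P_i$ deleted --- and a family of disjoint arcs in $\overline R$ joining the $P_i$ and possibly $\partial R$ to one another, there is an ambient isotopy of ${\mathbb R}^2$, built from $x$-translations and $x$-scalings of subconfigurations together with the insertion of new vertical segments and staples and hence leaving every $y$-coordinate, in particular every $y$-projection, unchanged, after which every arc has become a vertical segment or a staple of the prescribed type while the enclosure relations are unchanged. I would prove this by induction on the number of arcs: take an arc enclosed by no other (one exists by planarity), realise it as a vertical segment or staple pushed far to one side, shrink and slide the finitely many rectangles it must avoid so that they cease to obstruct it, and apply the induction hypothesis to what is left.

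The step I expect to be the main obstacle is this routing lemma, and inside it the interplay between the cyclic orders in which arcs meet the circles and the orders in which the rectangles get stacked vertically. A staple of the permitted shape can only cap off a contiguous end of a vertical stack, so whenever a self-arc is required to enclose a prescribed set of rectangles that set must be made contiguous by a suitable choice of stacking order; when such sets are nested and distributed over several levels of the nesting forest, all of the stacking orders and all of the widths must be chosen at once and coherently, each enclosing box being simultaneously wide enough to hold what it must hold and narrow enough to fit where it must fit. By comparison, the remaining point --- that the rectangular picture so constructed is ambient isotopic to the original $C\cup A$ --- is essentially formal, since two circles and arcs systems in ${\mathbb R}^2$ with the same nesting forest and the same record of how each arc subdivides the region containing it are ambient isotopic.
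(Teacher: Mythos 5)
Your outer induction on the nesting depth is sound as far as it goes (and the reduction of the ``moreover'' clause to stacking the depth-one rectangles vertically is fine), but all of the actual content of Theorem \ref{theorem:RCAS} has been pushed into the routing lemma, and the one-sentence argument you offer for it does not work as stated. The step ``take an arc enclosed by no other, realise it as a vertical segment or staple pushed far to one side, shrink and slide the finitely many rectangles it must avoid'' conceals two genuine problems. First, the rectangles you slide out of the way are not bare boxes: by your own depth induction they carry already-rectangularised interiors and attached arcs (both placed and not yet placed), and you must move them without destroying that structure, without changing the ambient isotopy class, and without disturbing the arcs you have already straightened; nothing in the sketch explains how to do this, and it is exactly this re-routing of an arc past an arbitrary sub-configuration that occupies the paper's Lemma \ref{lemma:BlackBox}, proved by an induction on the number of circles in the box with a long case analysis (endpoints on $b_r$, arcs of type I versus type $\sqcup$, inside versus outside $Z$). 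Second, in the rectangular picture every arc endpoint lies on a horizontal edge, so the endpoints attached to the top edge of a given rectangle form a \emph{contiguous} block of the cyclic order of all endpoints around that circle; hence the choice of which edge each arc attaches to, the stacking order of the siblings, and the widths of the enclosing boxes must be chosen simultaneously and compatibly with all the original cyclic orders and enclosure data, across every level of the nesting forest. You explicitly identify this coherence problem as ``the main obstacle,'' but the proposal does not resolve it; asserting at the end that two systems with the same combinatorial data are ambient isotopic is correct but beside the point, since the difficulty is to \emph{construct} a rectangular system realising that data with every inter-circle arc a single vertical segment.

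For comparison, the paper avoids the global coherence problem entirely by inducting on the number of arcs rather than on depth: it rectangularises $C\cup A'$ for $A'$ all but one arc, normalises the last arc $\alpha$ into a staircase of $m$ vertical and $m-1$ horizontal segments, and then reduces $m$ by isolating a black box $X=S\cap R$ next to $\alpha$ and invoking Lemma \ref{lemma:BlackBox} to drag the relevant intersection point from the bottom of $\partial R$ to the top, at the cost of deforming (but re-rectangularising) everything inside the box. If you want to keep your depth-first architecture, you would still need a lemma of essentially the same strength as Lemma \ref{lemma:BlackBox} to justify ``shrink and slide,'' together with an argument that the top/bottom attachment choices and stacking orders can be made consistently with the cyclic orders at every circle; as written, the proposal is a plausible plan rather than a proof.
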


 We call a circles and arcs system {\it rectangular}
if it satisfies the conditions (1) and (2) in the above theorem.
 For an example of rectangular circles and arcs system, 
see Figure \ref{fig:RCAS}.
 We say that an arc of $A$ as in (2)(a) in the theorem is of {\it type I}
and an arc as in (2)(b) is of {\it type $\sqcup$}.

\begin{figure}[htbp]
\begin{center}
\includegraphics[width=80mm]{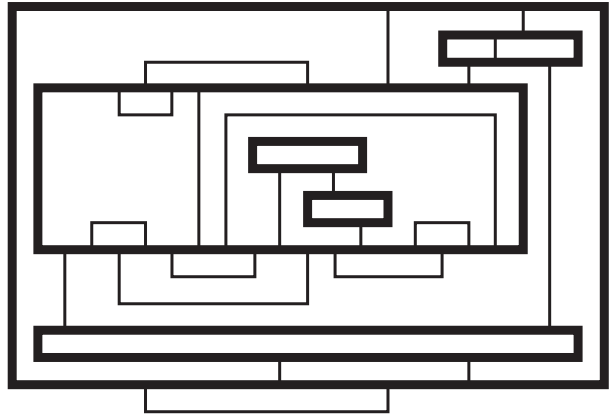}
\end{center}
\caption{rectangular circles and arcs system}
\label{fig:RCAS}
\end{figure}


\begin{corollary}\label{corollary:RSCAS}
 Let $D$ be an oriented link diagram in the plane ${\mathbb R}^2$. 
 The Seifert circles and arcs system for $D$
can be deformed by an ambient isotopy of ${\mathbb R}^2$
so that Seifert circles are 
rectangles composed 
of two vertical line segments and two horizontal line segments
and arcs are vertical line segments.
 Moreover, 
the isotopy can be taken
so that no pair of rectangular circles of the same depth 
overlap each other under $\pi_y$.
\end{corollary}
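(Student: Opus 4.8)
The plan is to deduce the Corollary directly from Theorem \ref{theorem:RCAS}, the only point to check being that a Seifert circles and arcs system never contains an arc of type $\sqcup$. First I would record that the Seifert circles and arcs system for an oriented diagram $D$ is indeed a circles and arcs system in the sense defined above: the Seifert circles form a disjoint union $C$ of circles, the substitute segments form a disjoint union $A$ of arcs, and each such segment meets $C$ exactly in its two endpoints, one on each of the two distinct Seifert circles it joins. In particular, as already observed in the discussion preceding Figure \ref{fig:depth}, no arc of this system has both of its endpoints on the same Seifert circle, since such an arc would correspond to a crossing whose two strands lie on one Seifert circle and this is incompatible with the orientations inherited from the link.

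Next I would apply Theorem \ref{theorem:RCAS} to the underlying (unoriented) circles and arcs system $C \cup A$. This supplies an ambient isotopy of ${\mathbb R}^2$ after which every circle of $C$ is a rectangle built from two vertical and two horizontal segments, no two rectangular circles of the same depth overlap under $\pi_y$, and every arc of $A$ is either of type I (a single vertical segment) or of type $\sqcup$ (a horizontal segment flanked by two vertical segments on the same side, with both endpoints on one circle). By the previous paragraph no arc of the Seifert system has both endpoints on a common circle, so the type $\sqcup$ alternative cannot occur; hence every arc is forced into type I, i.e.\ is a vertical line segment. The ``moreover'' clause about depths is carried over verbatim from Theorem \ref{theorem:RCAS}, and since an ambient isotopy of ${\mathbb R}^2$ can transport the orientations of the Seifert circles along with the circles themselves, the oriented Seifert circles and arcs system ends up in the asserted form.

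I do not expect a genuine obstacle here; the content is entirely in Theorem \ref{theorem:RCAS}, and the Corollary is a matter of verifying the hypotheses and then excluding the type $\sqcup$ case by the orientation argument. The only mild point worth making explicit is that the orientations of the Seifert circles are irrelevant to the construction of the isotopy of Theorem \ref{theorem:RCAS}, so attaching them back afterwards introduces nothing new; I would state this in one sentence rather than belabour it.
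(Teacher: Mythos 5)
Your argument is exactly the one the paper intends: apply Theorem \ref{theorem:RCAS} and exclude the type $\sqcup$ arcs via the observation (made just after Figure \ref{fig:smoothing2}) that no arc of a Seifert circles and arcs system has both endpoints on the same Seifert circle, so every arc must be of type I. The proposal is correct and follows the paper's (implicit) proof.
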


 For an example of deformation as in the above corollary, 
see Figure \ref{fig:RSCAS}.

\begin{figure}[htbp]
\begin{center}
\includegraphics[width=110mm]{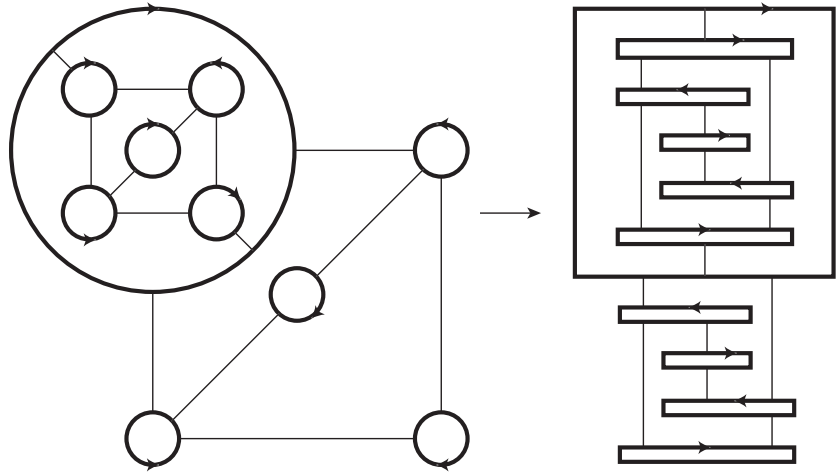}
\end{center}
\caption{deforming a Seifert circles and arcs system to be rectangular}
\label{fig:RSCAS}
\end{figure}

 A cirlces and arcs system is called {\it monadic}
if it has only one circle.
 Every connected link diagram admits a system of smoothing operations 
which yields a monadic circles and arcs system $S$.
 This is shown in Lemma \ref{lemma:monadic}.

 Let $S = C \cup A$ be a monadic circles and arcs system.
 An arc $\beta$ with $\beta \cap C = \partial \beta$ is called a {\it ruler}
if it 
is free from the endpoints $\partial A$,
is contained in the disk bounded by $C$,
and intersects every arc of $A$ transversely in at most one point.
 The number of intersection points of $\beta$ and $A$ is called the {\it length} of $\beta$.
 Then the {\it width} of $S$ is the maximal number among lengths of all rulers for $S$,
and let $w(S)$ denote it.
 The width of a connected link diagram $D$, denoted by $w(D)$, 
is the maximal width $w(S)$
over all systems of undirected smooth operations on $D$
yielding a monadic circles and arcs system $S$.

 The next is a corollary of Theorem \ref{theorem:RCAS} and Lemma \ref{lemma:monadic}.

\begin{corollary}\label{corollary:RMCAS}
 Let $D$ be a link diagram in the plane ${\mathbb R}^2$. 
 Then an adequate system of undirected smoothing operations
and an adequate ambient isotopy of ${\mathbb R}^2$
deform $D$ into a monadic circles and arcs system $C \cup A$
such that (1) the circle $C$ is a rectangle composed 
of two vertical line segments and two horizontal line segments,
and (2) each arc of $A$ is of type I or $\sqcup$.
\end{corollary}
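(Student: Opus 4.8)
The plan is to obtain the statement directly by chaining together the two results to which it is attributed, with no new construction needed. First I would observe that the hypothesis must be read as: $D$ is a \emph{connected} link diagram. Indeed this is forced by the conclusion, since a monadic circles and arcs system has a single circle and can only arise from a connected diagram (undirected smoothings at crossings never fuse circles lying in different connected components of $D$). Granting connectedness, I would invoke Lemma \ref{lemma:monadic} to produce a system of undirected smoothing operations on $D$ whose output is a monadic circles and arcs system $S = C \cup A$: here $C$ is the single circle produced and $A$ is the union of the line segments substituting for the crossings.

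Next I would feed $S$, regarded simply as an abstract circles and arcs system in $\mathbb{R}^2$, into Theorem \ref{theorem:RCAS}. That theorem supplies an ambient isotopy of $\mathbb{R}^2$ after which $C$ is a rectangle built from two vertical and two horizontal line segments, while every arc of $A$ is either a vertical line segment (type I) or a $\sqcup$-shaped arc consisting of one horizontal and two vertical segments with both endpoints on the same circle (type $\sqcup$). Precomposing this isotopy with the smoothing operations from the previous step gives the asserted deformation of $D$, establishing (1) and (2).

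Since both ingredients are already in hand, the only thing left to check is a small piece of bookkeeping: in a monadic system there is only one circle, so the phrase ``both endpoints in the same circle'' in clause (2)(b) of Theorem \ref{theorem:RCAS} holds automatically for every arc, and the dichotomy ``(a) vertical line segment / (b) $\sqcup$-shaped arc on one circle'' collapses exactly to ``type I or type $\sqcup$'', with no residual cases. Likewise the ``same depth, no overlap under $\pi_y$'' clause of Theorem \ref{theorem:RCAS} is vacuous here. I therefore do not expect a genuine obstacle at this stage; the real content of the result lives in Lemma \ref{lemma:monadic} (finding undirected smoothings that amalgamate all the resulting circles into a single one) and in Theorem \ref{theorem:RCAS} (the rectangularization isotopy), both of which are assumed.
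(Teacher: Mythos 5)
Your proposal is correct and matches the paper's intended argument exactly: the corollary is stated as an immediate consequence of Lemma \ref{lemma:monadic} (producing a monadic system from the connected diagram via undirected smoothings) followed by Theorem \ref{theorem:RCAS} (the rectangularizing isotopy), which is precisely your chain. Your remark that the hypothesis must implicitly require $D$ to be connected is also right, since Lemma \ref{lemma:monadic} is stated only for connected diagrams and a monadic system cannot arise otherwise.
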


 For an example of deformation as in the above corollary, 
see Figure \ref{fig:RMCAS}.

\begin{figure}[htbp]
\begin{center}
\includegraphics[width=120mm]{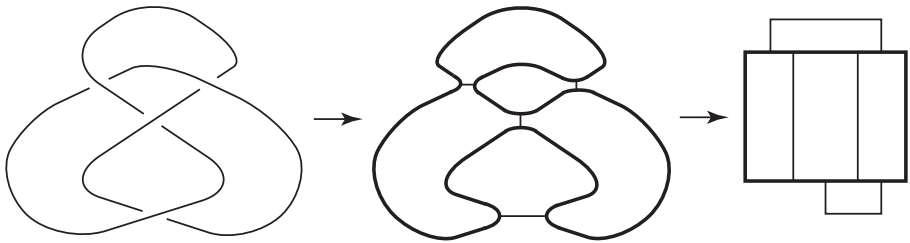}
\end{center}
\caption{deforming a link diagram to a rectangular monadic circles and arcs system}
\label{fig:RMCAS}
\end{figure}

\section{Proof of Theorem \ref{theorem:2c}}\label{section:2c}

\begin{lemma}\label{lemma:monadic}
 From any connected link diagram $D$,
we can obtain a monadic circles and arcs system
by adequately applying undirected smoothing operations
at all the crossings of $D$.
\end{lemma}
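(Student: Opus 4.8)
The plan is to induct on the number of crossings $c(D)$. If $c(D)=0$, then a connected link diagram is a single embedded circle, and its circles and arcs system consists of just that circle with no arcs, hence is monadic; this is the base case. For the inductive step I would smooth one crossing so as to keep the diagram connected, apply the inductive hypothesis, and then check that reinserting the arc that substitutes for the smoothed crossing affects neither the circles of the system nor their number.

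The one substantial point is the following local fact: \emph{if $D$ is connected and $x$ is any crossing of $D$, then at least one of the two undirected smoothings of $x$ produces a connected diagram.} To prove it, remove from $D$ a small open disk neighbourhood of $x$; what remains is a graph $G^{\circ}$ with four degree-one endpoints (call them stubs) $h_1,h_2,h_3,h_4$, occurring in this cyclic order along the boundary of the removed disk, and the two undirected smoothings $D_a$ and $D_b$ are obtained from $G^{\circ}$ by reinserting two disjoint arcs joining the stubs in the patterns $\{h_1,h_2\},\{h_3,h_4\}$ and $\{h_2,h_3\},\{h_4,h_1\}$ respectively. Since $D$ is connected, every connected component of $G^{\circ}$ contains at least one stub; and a handshake count — each crossing of $D$ lying in a component of $G^{\circ}$ contributes degree $4$ and each stub contributes degree $1$, while the total degree of a finite graph is even — shows that every component of $G^{\circ}$ contains an \emph{even} number of stubs. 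Therefore the partition $P$ of $\{1,2,3,4\}$ induced by the components of $G^{\circ}$ is either the one-block partition, in which case $G^{\circ}$ is connected and both $D_a$ and $D_b$ are connected, or a partition into two blocks of size two. In the latter case there are only three possibilities, $\{12\mid 34\}$, $\{13\mid 24\}$, $\{14\mid 23\}$; a direct inspection shows that $D_a$ fails to be connected only when $P=\{12\mid 34\}$ and $D_b$ fails only when $P=\{14\mid 23\}$, so at least one of $D_a$, $D_b$ is connected in every case. (This argument makes no special assumption on $x$, so it also covers crossings incident to a loop and diagrams with very few crossings.)

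Granting the local fact, the inductive step is routine. Choose any crossing $x$ and let $\sigma$ be an undirected smoothing of $x$ given by the local fact, so that the resulting diagram $D'$ is connected with $c(D')=c(D)-1$. By the inductive hypothesis there is a choice of undirected smoothings at the crossings of $D'$ for which the associated state (the family of circles obtained by performing all these smoothings) is a single circle. Performing $\sigma$ at $x$ together with these smoothings at the remaining crossings is a choice of undirected smoothings at all crossings of $D$, and the state it yields on $D$ is exactly the state yielded on $D'$, namely one circle. Finally every arc that substitutes for a crossing of $D$ has both of its endpoints on this single circle, and adding such arcs neither creates nor destroys circles; hence the circles and arcs system of $D$ obtained this way has exactly one circle and is monadic.

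The main obstacle is the local connectivity fact. Its proof is short once one notices the parity constraint on the number of stubs in each component of $G^{\circ}$, but some care is needed to make sure the finitely many local pictures at a crossing are genuinely exhausted by the case analysis — in particular crossings with a loop on one side — and to keep straight that the two \emph{undirected} smoothings correspond precisely to the two planar pairings $\{12\mid 34\}$ and $\{23\mid 14\}$ of the four stubs, not to the third (non-planar) pairing $\{13\mid 24\}$.
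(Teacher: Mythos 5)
Your proof is correct, but it takes a genuinely different route from the paper. The paper argues by descent on the number of circles in an arbitrary smoothing state: perform any undirected smoothings at all crossings; since $D$ is connected the resulting system $C\cup A$ is connected, so if $C$ has more than one circle some arc of $A$ must join two \emph{distinct} circles, and flipping the smoothing at the corresponding crossing merges those two circles into one (Figure \ref{fig:MakeMonadic}), reducing the circle count by one; iterating gives a monadic system in at most $|C|-1$ flips. You instead induct on the number of crossings, and the real content of your argument is the local fact that at any crossing of a connected diagram at least one of the two undirected smoothings keeps the diagram connected; your proof of that fact (every component of the punctured graph contains an even, positive number of the four stubs, so the induced partition is either one block or two blocks of size two, and the two planar pairings cannot both be disconnecting) is sound, including the degenerate loop cases. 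The paper's argument is shorter and needs no case analysis or induction, while yours isolates a reusable local statement that is slightly stronger (you may prescribe \emph{which} crossing to resmooth and still retain connectivity) at the cost of the parity bookkeeping; both establish the lemma.
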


\begin{figure}[htbp]
\begin{center}
\includegraphics[width=70mm]{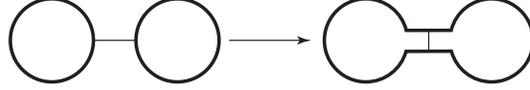}
\end{center}
\caption{changing the way of undirected smoothing at a crossing}
\label{fig:MakeMonadic}
\end{figure}

\begin{proof}
 We apply arbitrary undirected smoothing operations at all the crossings of $D$,
to obtain a circles and arcs system $S = C \cup A$.
 Note that $S = C \cup A$ is connected because $D$ is connected.
 If the set of circles $C$ consists of a single circle, we are done.
 If it contains plural circles,
then there is an arc of $A$ which connects two distinct circles of $C$.
 We change the way of undirected smoothing at the crossing corresponding the arc,
and obtain a circles and arcs system with one less circles.
 See Figure \ref{fig:MakeMonadic}.
 Repeating this, we obtain a monadic circles and arcs system.
\end{proof}

\begin{figure}[htbp]
\begin{center}
\includegraphics[width=125mm]{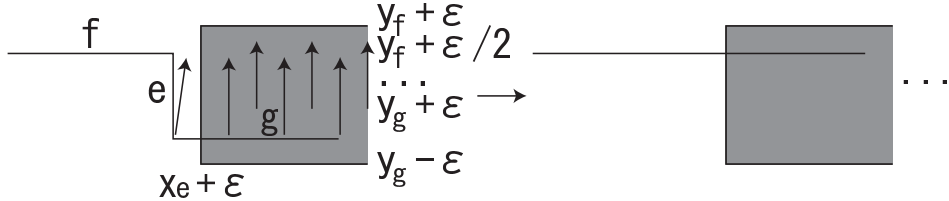}
\end{center}
\caption{straight merge}
\label{fig:s-merge}
\end{figure}

\begin{lemma}\label{lemma:s-merge}
 Let $R$ be a rectangular diagram with $n$ vertical edges.
 Suppose that $R$ has a vertical edge $e$
such that $e$ is free from the crossings of $R$,
and that the horizontal edges $f, g$ sharing an endpoint with $e$
are in the opposite sides of $e$ to each other.
 Then an adequate ambient isotopy of ${\mathbb R}^2$
deforms $f \cup e \cup g$ into a single horizontal edge,
and $R$ into a rectangular diagram with $n-1$ vertical edges.
\end{lemma}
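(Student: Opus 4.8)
The plan is to realise the deformation by an explicitly constructed ambient isotopy of ${\mathbb R}^2$. Since such an isotopy is a homeomorphism of the plane carried out on the whole of $R$, it changes neither the underlying $4$-valent graph, nor the over/under data, nor the set of crossings; the only real task is therefore to bring $R$ to the prescribed geometric shape while keeping it a rectangular diagram at the moment we stop. After reflecting ${\mathbb R}^2$ in a horizontal and/or a vertical line if necessary --- each such reflection sends rectangular diagrams to rectangular diagrams --- I may assume $e=\{x_0\}\times[y_1,y_2]$ with $y_1<y_2$, that $g$ runs leftward from $(x_0,y_1)$ to $(x_g,y_1)$ with $x_g<x_0$, and that $f$ runs rightward from $(x_0,y_2)$ to $(x_f,y_2)$ with $x_0<x_f$. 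Because $e$ carries no crossing and is the only vertical edge on the line $x=x_0$, the open segment $\{x_0\}\times(y_1,y_2)$ meets no edge of $R$ except $e$; in particular every horizontal edge of $R$ whose height lies strictly between $y_1$ and $y_2$ misses this line and hence lies entirely in $\{x<x_0\}$ or entirely in $\{x>x_0\}$. Call these the \emph{left} and \emph{right slab edges}.

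The first step, and the crux, is to evacuate the open slab $\{y_1<y<y_2\}$ of horizontal edges: I claim that an ambient isotopy of ${\mathbb R}^2$ leaving the link diagram unchanged can push every right slab edge, together with the endpoints of the vertical edges incident to it, to a height strictly below $y_1$, and every left slab edge to a height strictly above $y_2$. One handles the right slab edges one at a time in order of increasing height (symmetrically, the left ones in order of decreasing height), fixing in advance a thin band just below $y_1$ that is free of horizontal edges and lowering the successive edges into it so that each lands above all those placed before it. That this can be done without disturbing a single crossing rests on the clear corridor just found: as a right slab edge is lowered it stays in $\{x>x_0\}$, so it never meets $e$ (on $x=x_0$), nor $f$ (which stays above it throughout), nor $g$ (which lies in $\{x\le x_0\}$); the only edges it can run into on the way down are vertical edges it already crossed over, and there the crossing point merely slides down along that edge; and no corner of $R$ can block the descent, for the horizontal edge meeting such a corner would be a right slab edge of smaller height, already removed. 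After this step the closed slab $\{y_1\le y\le y_2\}$ meets $R$ only in $e$, $f$, $g$ and sub-segments of vertical edges that run straight through it. (This is also where the hypothesis that $f$ and $g$ lie on opposite sides of $e$ enters crucially: it is what will let $f\cup e\cup g$ become a single horizontal edge --- on the same side one would get instead a $\sqcup$-shaped arc.)

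Now comes the merge. Perform an ambient isotopy that slides $f$ downward from height $y_2$ to height $y_1$, carrying with it the corner $(x_0,y_2)$, so that $e$ shrinks to a point and disappears, and the corner $(x_f,y_2)$, so that the vertical edge $v$ meeting $f$ there is correspondingly lengthened or shortened along the line $x=x_f$. By the previous step the rectangle $[x_0,x_f]\times(y_1,y_2)$ now contains no horizontal edge other than $f$ itself and no endpoint of a vertical edge, so during the descent $f$ runs only into the vertical edges it was already crossing, whose crossing points slide harmlessly along them, and the line $x=x_f$ carries there no other vertical edge and no horizontal edge, so $v$ is reshaped without gaining or losing a crossing. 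When $f$ reaches height $y_1$ it is colinear with $g$ and shares the point $(x_0,y_1)$ with it; that point ceases to be a corner, and $f\cup e\cup g$ has become a single horizontal edge of height $y_1$ spanning $[x_g,x_f]$, while $e$ is gone, so $R$ now has $n-1$ vertical edges. After one more small ambient isotopy, if needed, to make the heights of the horizontal edges, and the abscissae of the vertical edges, pairwise distinct without affecting any crossing, we have again a rectangular diagram --- now with $n-1$ vertical edges and with $f\cup e\cup g$ realised as a single horizontal edge.

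I expect the evacuation step to be where the real work lies: one must check that the finite sequence of vertical moves of slab edges (and of the incident vertical-edge endpoints, which can force further small adjustments up the chain) genuinely assembles into an ambient isotopy of the plane fixing the combinatorial diagram. The two features that keep this routine rather than delicate are exactly the hypotheses at hand: $e$ being crossing-free yields the clear corridor along $x=x_0$, which pins every evacuated edge to one side of that line and so away from $f\cup e\cup g$; and processing the slab edges in order of height excludes any corner obstructing a descent.
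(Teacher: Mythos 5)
Your argument is correct: the hypotheses are used exactly where they must be (the crossing-free edge $e$ gives the clear corridor on the line $x=x_0$, so every horizontal edge at a height strictly between $y_1$ and $y_2$ lies wholly to one side; the opposite-sides hypothesis is what makes the end result a single edge rather than a $\sqcup$), and your ordering-by-height evacuation plus the final slide of $f$ does assemble into an ambient isotopy with no crossing created or destroyed. The route, however, is organized differently from the paper's. The paper performs a single explicitly described isotopy supported in the half-plane on $g$'s side of $e$: after choosing $\epsilon$ so that no horizontal edge has ordinate just above $y_f$ or near $y_g$ and no vertical edge has abscissa just right of $e$, it compresses the whole slab $\{y_g+\epsilon\le y\le y_f+\epsilon\}$ (on that side) into a thin band above $y_f$ while stretching the thin band around $y_g$ upward, so that $g$ rises to the level of $f$ and the corner at $e$ straightens in one move. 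In other words, the intervening material is not moved edge by edge but carried wholesale by one product-type vertical rescaling, which is why the paper needs no induction, no ordering of slab edges, and touches only one side of $e$. Your two-stage version (first empty the slab, right slab edges below $y_1$ and left ones above $y_2$, then lower $f$) is more elementary and makes the ``no new or lost crossings'' verification very explicit, but it pays for this with exactly the bookkeeping you flag at the end: checking that each descent is unobstructed and that the finitely many moves compose into an ambient isotopy. It also does more than is needed, since the left slab edges never interfere with the merge and the paper accordingly leaves that side untouched. Both proofs are valid; the paper's buys brevity by packaging evacuation and merge into one shrink/expand map, yours buys transparency at the level of individual edges and crossings.
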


 We call the ambient isotopy as in the proof below a {\it straight merge} operation at $e$.
 A similar thing holds also when $e$ is horizontal.

\begin{proof}
 Let $y_f$, $y_g$ be the ordinates of $f$, $g$ respectively,
and $x_e$ the abscissa of $e$.
 We assume, without loss of generality, that $y_f > y_g$, 
and that $f$ is in the left side of $e$ and $g$ is in the right side of $e$.
 We take a small positive real number $\epsilon$
so that there is no horizontal edge of $R$ 
at any ordinate in $(y_g -\epsilon, y_g) \cup (y_g, y_g+\epsilon) \cup (y_f, y_f+\epsilon)$
and that there is no vertical edge of $R$
at any abscissa in $(x_e, x_e+\epsilon)$.
 Then there is an ambient isotopy of ${\mathbb R}^2$ as in the conclusion of this lemma
which shrinks
the area $[y_g+\epsilon, y_f+\epsilon] \times [x_e+\epsilon, \infty)$
in the vertical direction
so that it is deformed
into the area $[y_f+(\epsilon/2), y_f+\epsilon] \times [x_e+\epsilon, \infty)$
and expands
the area $[y_g-\epsilon, y_g+\epsilon] \times [x_e+\epsilon, \infty)$
in the vertical direction
so that it is deformed
into the area $[y_g-\epsilon, y_f+(\epsilon/2)] \times [x_e+\epsilon, \infty)$.
 See Figure \ref{fig:s-merge}.
\end{proof}

\begin{figure}[htbp]
\begin{center}
\includegraphics[width=70mm]{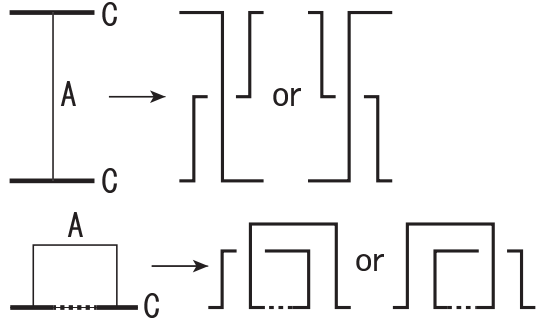}
\end{center}
\caption{restoring the link diagram}
\label{fig:restore}
\end{figure}

\begin{proof}
 We prove Theorem \ref{theorem:2c}.
 Let $D$ be a connected link diagram,
and $c(D)$ the number of crossings of $D$.
 We perform undirected smoothing operations at all the crossings
so that we obtain a monadic circles and arcs system $S=C\cup A$
with $w(D) = w(S)$.

 Let $\beta$ be a ruler for $S$
such that its length gives the width $w(D)$.
 We can isotope $S \cup \beta$ 
so that (1) $\beta$ is a horizontal line segment,
that (2) the circle $C$ forms the boundary circle of a tubular neighbourhood of $\beta$
and consists of two horizontal line segments and two vertical line segments,
and that (3) arcs intersecting $\beta$ in a single point are vertical line segments
and the other arcs of $A$ 
consist of two vertical line segments and a single horizontal line segment.

 Then we restore the diagram $D$ from $S$
by replacing arcs of $A$ with crossings as shown in Figure \ref{fig:restore}.
 For each of $w(D)$ vertical arcs of $A$,
we use a vertical line segment as an overpass, 
and a union of two vertical line segments and a single horizontal line segment as an underpass
to form a crossing.
 Note that the two vertical lines are free from the crossing,
and horizontal lines with which they share endpoints are in the opposite sides of them.
 For each of the other arcs of $A$, 
we use a union of two vertical line segments and a single horizontal line segment
as both of an overpass and an underpass.
 Note that the three vertical lines are free from the crossing.
 For two of them,
horizontal lines with which they share endpoints are in the opposite sides of them.
 Thus we obtain a link diagram
composed of 
$2+3w(D)+4(c(D)-w(D))=2+4c(D)-w(D)$ vertical line segments.

\begin{figure}[htbp]
\begin{center}
\includegraphics[width=70mm]{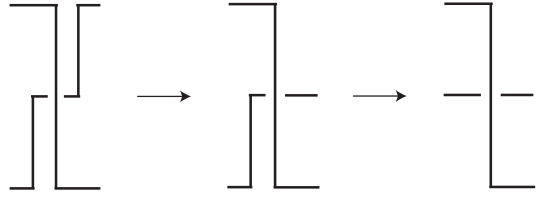}
\end{center}
\caption{straight merge operations at vertical line segments
in an underpass}
\label{fig:SmergeAtVerticalLine}
\end{figure}

 By slightly perturbing ordinates of horizontal line segments,
and abscissa of vertical line segments,
we obtain a rectangular diagram with 
$2+4c(D)-w(D)$ vertical edges.
 Then we perform straight merge operations at vertical edges as above,
twice per each crossing
as shown in Figure \ref{fig:SmergeAtVerticalLine},
to obtain a rectangular diagram 
with $(2+4c(D)-w(D))-2c(D)=2+2c(D)-w(D)$ vertical edges.
\end{proof}

\begin{figure}[htbp]
\begin{center}
\includegraphics[width=80mm]{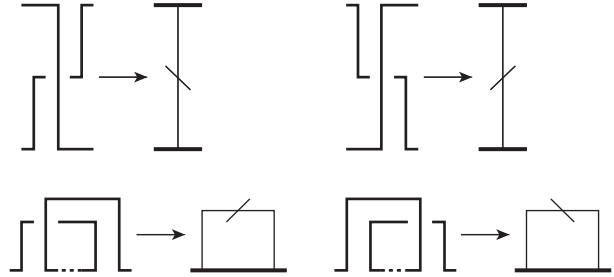}
\end{center}
\caption{Crossings are denoted by arcs with a slash.}
\label{fig:notation}
\end{figure}

 In the rest of this section,
crossings are denoted by arcs with a slash
as shown in Figure \ref{fig:notation}
when we need to specify 
over or under information of them.

\begin{figure}[htbp]
\begin{center}
\includegraphics[width=100mm]{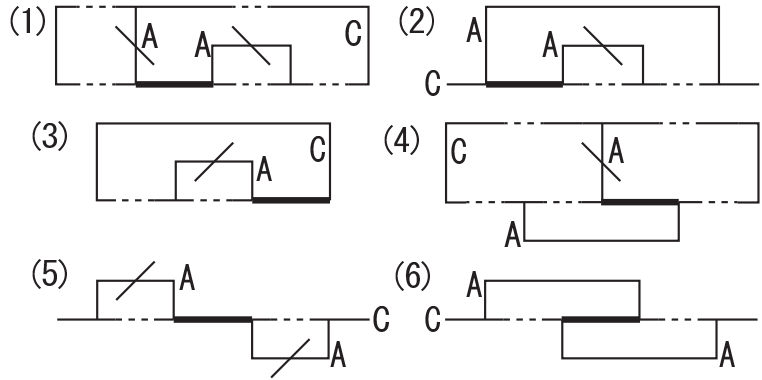}
\end{center}
\caption{horizontal line segments at which we perform straight merge operations}
\label{fig:MergeAtHorizontalEdge}
\end{figure}

 After applying straight merge operations at vertical line segments
twice per each crossing as in the above proof,
we can perform straight merge operations
at certain kinds of horizontal line segments
if there are.
 Let $C_h$ be the union of the top and bottom horizontal line segments of $C$.
 The endpoints of $A$ divide $C_h$ into shorter horizontal line segments.
 Among them,
those which are depicted in bold lines in Figure \ref{fig:MergeAtHorizontalEdge}
are available for straight merge operations.
 For type (5), for example, 
the operation is performed as shown in Figure \ref{fig:MergeAtHorizontalEdge2}.

\begin{figure}[htbp]
\begin{center}
\includegraphics[width=120mm]{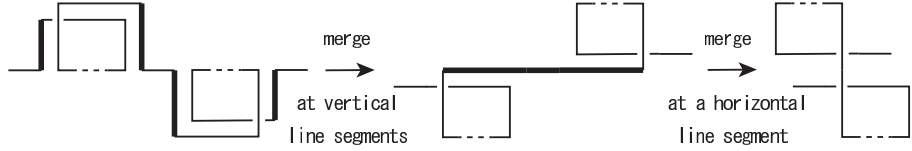}
\end{center}
\caption{a straight merge at a horizontal line segment}
\label{fig:MergeAtHorizontalEdge2}
\end{figure}

 In case of the circles and arcs system in Figure \ref{fig:cannot},
we cannot apply straight merge operation
at any horizontal line segment
after the straight merge operations at vertical line segments.
 (However, the resulting rectangular diagram is not minimal
with respect to the number of vertical edges.)

\begin{figure}[htbp]
\begin{center}
\includegraphics[width=120mm]{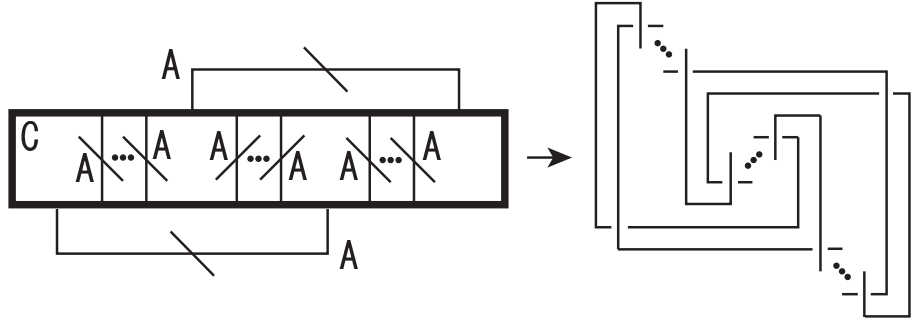}
\end{center}
\caption{We cannot apply a straight merge at any horizontal line segment to this system.}
\label{fig:cannot}
\end{figure}

 If there is an arc of type $\sqcup$ inside $C$,
and the link diagram does not have a monogon face,
then we can apply a straight merge at a horizontal line segment
as in the lemma below.
 Let $\alpha$ be an arc of type $\sqcup$ inside $C$ (resp. outside $C$),
and $Q$ the disk bounded by the circle
formed by $\alpha$ and a subarc of $C_h$.
 We call $\alpha$ is {\it innermost}
among the arcs of type $\sqcup$ inside $C$ (resp. outside $C$)
if $Q$ does not contain such an arc other than $\alpha$.

\begin{lemma}\label{lemma:InnermostSqcup}
{\rm 
 Let $D, S, C, A$ as in the proof of Theorem \ref{theorem:2c},
$C_h$ as above,
and $m$ the number of innermost arcs of type $\sqcup$ inside $C$.
 Suppose that $D$ has no monogon region.
 After performing straight merge operations
at vertical line segments twice per each arc of $A$,
we can apply at least $m$ straight merge operations
at hirozontal line segments as in Figure \ref{fig:MergeAtHorizontalEdge} (6).
}
\end{lemma}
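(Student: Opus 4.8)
We retain all the notation from the proof of Theorem~\ref{theorem:2c}: the monadic circles and arcs system $S=C\cup A$ with $w(S)=w(D)$, the ruler $\beta$ of maximal length, the rectangular position in which $C=\partial N(\beta)$ is a rectangle made of two horizontal and two vertical edges, the type~I arcs are vertical line segments crossing $\beta$, the type~$\sqcup$ arcs are $\sqcup$- or $\sqcap$-shaped, and the rectangular diagram $R$ obtained from this position by restoring the crossing corresponding to each arc as in Figure~\ref{fig:restore} and then performing two straight merges at vertical edges per arc as in Figure~\ref{fig:SmergeAtVerticalLine}; thus $R$ has $2+2c(D)-w(D)$ vertical edges. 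Let $\alpha_1,\dots,\alpha_m$ be the innermost arcs of type~$\sqcup$ lying inside $C$, let $\sigma_j$ denote the subarc of $C_h$ whose endpoints are the two feet of $\alpha_j$, and let $Q_j$ be the disk bounded by $\alpha_j\cup\sigma_j$.

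The plan is, first, to establish a separation statement: the disks $Q_1,\dots,Q_m$ are pairwise disjoint, each $Q_j$ meets $A$ only in $\alpha_j$, no foot of an arc of $A$ lies in the interior of $\sigma_j$, and (when $c(D)\ge1$) $\beta\cap Q_j=\emptyset$. I would deduce all of this from the facts that the arcs of $A$ are pairwise disjoint and that $\alpha_j$ is innermost: any arc meeting the interior of $Q_j$ must cross $\alpha_j$ or have a foot on $\sigma_j$; a type~$\sqcup$ arc inside $Q_j$ contradicts innermostness, while a type~I arc with a foot on $\sigma_j$ would have to cross the horizontal segment of $\alpha_j$; a ruler realizing $w(D)\ge 1$ crosses a type~I arc, which lies outside $Q_j$, whereas $\beta$ is disjoint from $\alpha_j\cup\sigma_j$, forcing $\beta\cap Q_j=\emptyset$; and two distinct $Q_j$'s nested one inside the other would make the outer arc non-innermost, while two $Q_j$'s in non-nested overlapping position would force their bounding arcs to intersect. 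Because each $Q_j$ is ``empty'' in this sense, the construction of $R$ alters a neighbourhood of $Q_j$ only through the restoration of the single crossing $x_j$ corresponding to $\alpha_j$ and the two straight merges carried out for $\alpha_j$; in particular, the edges and crossings of $R$ incident to $\sigma_j$ are determined by this purely local data.

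Next I would carry out the local analysis at each $\alpha_j$. In a neighbourhood of $Q_j$ the system $S$ looks like a single $\sqcup$-arc $\alpha_j$ with feet on $C_h$ bounding the empty disk $Q_j$, and this is exactly the local picture in $D$ around the crossing $x_j$, with $Q_j$ a slightly shrunk copy of one of the four local regions at $x_j$. I would split into two cases according to whether the two arcs of $D$ emanating from $x_j$ on the $\sigma_j$-side belong to the same edge of $D$ (equivalently, whether that local region is a monogon) or to two different edges; the hypothesis that $D$ has no monogon region rules out the first case. In the remaining case, restoring $x_j$ as in Figure~\ref{fig:restore} and then performing the two straight merges at the vertical segments of $\alpha_j$ as in Figure~\ref{fig:SmergeAtVerticalLine} should yield exactly configuration~(6) of Figure~\ref{fig:MergeAtHorizontalEdge}: a horizontal subsegment $e_j$ of $C_h$, lying on $\sigma_j$, which is free of crossings of $R$ and whose two incident vertical edges of $R$ lie on opposite sides of $e_j$. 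By the horizontal analogue of Lemma~\ref{lemma:s-merge}, a straight merge of $R$ at $e_j$ is then available.

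Finally, since $e_j\subset\sigma_j\subset\partial Q_j$ and the disks $Q_1,\dots,Q_m$ are pairwise disjoint, the horizontal edges $e_1,\dots,e_m$ are pairwise disjoint and a straight merge at any one of them does not disturb the others; hence all $m$ straight merges can be performed, which is the assertion of the lemma. I expect the main obstacle to be the local analysis of the third paragraph: one has to track the precise shapes of the moves in Figures~\ref{fig:restore} and~\ref{fig:SmergeAtVerticalLine} carefully enough to see that, once a monogon at $x_j$ is excluded, the procedure genuinely produces the mergeable horizontal edge of type~(6) and not one of the non-mergeable configurations appearing in Figure~\ref{fig:MergeAtHorizontalEdge}, and to pin down exactly how the monogon is the only way this can fail.
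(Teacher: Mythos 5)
There is a genuine gap, and it sits exactly where you predicted it would: the ``separation statement'' in your second paragraph is false, and the local analysis built on it cannot work. You claim that no foot of an arc of $A$ lies in the interior of $\sigma_j$, arguing via arcs that would have to cross $\alpha_j$. That argument only rules out arcs lying \emph{inside} $C$; an arc of type $\sqcup$ lying \emph{outside} $C$ can perfectly well have a foot in ${\rm int}\,\sigma_j$ without meeting $Q_j$ or $\alpha_j$ at all. Worse, if your claim were true --- $Q_j$ empty and ${\rm int}\,\sigma_j$ free of endpoints of $A$ --- then upon restoring the crossing corresponding to $\alpha_j$ the region of $D$ on the $\sigma_j$-side would be bounded by that single crossing and a single strand, i.e.\ a monogon, contradicting the hypothesis. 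So under the standing assumption of the lemma your separation statement can never hold, and the case you analyse in the third paragraph is vacuous; your two claims (emptiness of the picture near $Q_j$, and ``not a monogon'') are mutually inconsistent.

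The paper's proof runs in the opposite direction: from the absence of monogons it \emph{deduces} that ${\rm int}\,\sigma_j$ must contain an endpoint of $A$, necessarily of an arc $\gamma$ lying outside $C$; choosing $\gamma$ innermost outside $C$, the no-monogon hypothesis is used a second time to exclude the possibility that both endpoints of $\gamma$ lie in $\sigma_j$; and it is the \emph{pair} $\alpha_j\cup\gamma$, with $\gamma$ having exactly one foot between the feet of $\alpha_j$, that constitutes pattern (6) of Figure \ref{fig:MergeAtHorizontalEdge} --- the mergeable horizontal segment is the piece of $C_h$ between a foot of $\alpha_j$ and the foot of $\gamma$, whose incident vertical segments point to opposite sides (one inside, one outside $C$). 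Your picture, in which the restored crossing $x_j$ alone produces the type-(6) edge after the two vertical merges, misidentifies what configuration (6) is. Your observations that the $Q_j$ are pairwise disjoint and contain no arcs other than $\alpha_j$ are correct and do give the disjointness of the $m$ merge sites, but to repair the proof you must replace the emptiness claim for $\sigma_j$ by the existence-of-$\gamma$ argument above.
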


\begin{proof}
 Let $\alpha$ be an arc of type $\sqcup$ which is innermost inside $C$,
and $\beta$ the subarc of $C_h$ between the endpoints $\partial \alpha$.
 If int\,$\beta$ is disjoint from endpoints of $A$, 
then $D$ has a monogon region, a contradiction.
 Hence int\,$\beta$ contains an endpoint of $A$.
 Let $\gamma$ be an innermost arc of $A$ outside $C$
with at least one of its endpoints in $\beta$.
 If $\gamma$ has both its endpoints in $\beta$,
then $\gamma$ gives a monogon region of $D$,
which is a contradiction.
 Hence $\gamma$ has precisely one of its endpoints in $\beta$.
 Then $\alpha$ and $\gamma$ together form the pattern (6) in Figure \ref{fig:MergeAtHorizontalEdge},
and we can apply a straight merge operation there.
\end{proof}

\section{Proof of Theorem \ref{theorem:c+2s}}\label{section:c+2s}

%
%

In this section, 
we prove Theorem \ref{theorem:c+2s}
using Corollary \ref{corollary:RSCAS}.
(Corollary \ref{corollary:RSCAS} is a corollary of Theorem \ref{theorem:RCAS}
which is proven in the next section.)

\begin{proof}
 We prove Theorem \ref{theorem:c+2s}.
 Let $D$ be an oriented link diagram, 
and $c(D)$ and $s(D)$ the numbers of crossings and Seifert circles of $D$
respectively.
 We perform smoothing operations at all the crossings, 
so that we obtain a Seifert circles and arcs system $S=C \cup A$.
 Using Corollary \ref{corollary:RSCAS},
we can deform $S$
by an ambient isotopy of ${\mathbb R}^2$
into a rectangular Seifert circles and arcs system $R$.
 We restore the diagram $D$ from $R$
by replacing arcs of $A$ with crossings
as shown in the upper half of Figure \ref{fig:restore}.
 The resulting link diagram has $2s(D)+3c(D)$ vertical line segments.
 Then we perform straight merge operations
as described in the proof of Lemma \ref{lemma:s-merge}
twice per each crossing
as shown in Figure \ref{fig:SmergeAtVerticalLine}.
 Note that each straight merge operation
decreases the number of vertical line segments by one.
 After an adequate small ambient isotopy of ${\mathbb R}^2$,
no pair of line segments are colinear.
 Thus we obtain a rectangular diagram with $2s(D)+3c(D)-2c(D)=2s(D)+c(D)$ vertical edges.



\begin{figure}[htbp]
\begin{center}
\includegraphics[width=120mm]{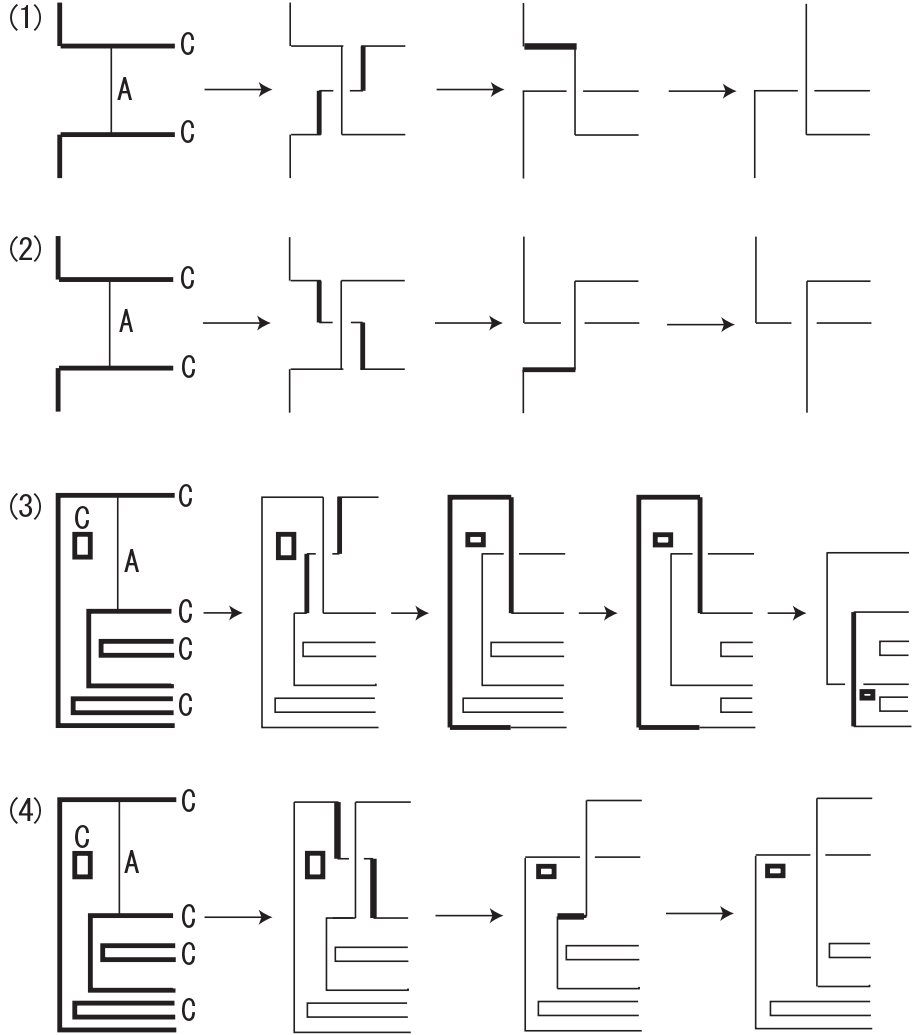}
\end{center}
\caption{straight merge or generalized merge}
\label{fig:MergesAtLeftmostArc}
\end{figure}
 
 We consider the case
where $c(D) \ge 1$ and $D$ has no nugatory crossings.
 If $c(D)=1$, then $D$ would have a nugatory crossing.
 Hence $c(D) \ge 2$.
 We observe the leftmost arc $\alpha_l$ of $A$.
 In the argument in the previous paragraph,
we have performed straight merge operations twice
at two vertical line segments in underpass of the crossing
corresponding to $\alpha_l$.
 The second arrows in Figure \ref{fig:MergesAtLeftmostArc}
show the deformation as above.
 Now, we perform deformations
described by the third and the forth arrows in Figure \ref{fig:MergesAtLeftmostArc}.
 Note that $\alpha_l$ has its endpoints in distinct circles of $C$
since $R$ is a Seifert circles and arcs system.
 In Cases (1) and (2) in Figure \ref{fig:MergesAtLeftmostArc},
$\alpha_l$ has its endpoints in circles of the same depth,
while it does not in Cases (3) and (4).
 In Cases (1), (2) and (4) in Figure \ref{fig:MergesAtLeftmostArc},
we apply a straight merge operation at a horizontal line segment drawn in a bold line.
 In Case (3) in Figure \ref{fig:MergesAtLeftmostArc},
we shrink circles of $C$
which overlap $\alpha_l$ under $\pi_x$ and do not contain an endpoint of $\alpha_l$
to the right direction
so that they are in the right side of the vertical line segment
forming the overpass of the crossing corresponding $\alpha_l$,
and then,
we perform a $\lq\lq$generalized merge operation"
as shown by the forth arrow.
 We do a similar operation for the rightmost arc of $A$,
to obtain a rectangular diagram with $2s(L)+c(L)-2$ vertical edges. 
\end{proof}

\section{Rectangular circles and arcs system}\label{section:RCAS}

 We prove Theorem \ref{theorem:RCAS} in this section.
%

\begin{definition}\label{definition:BlackBox}
 Let $S= C \cup A$ be a rectangular circles and arcs system
with $C$ being the union of circles and $A$ the union of arcs.
 Let $R$ be a rectangular disk
whose boundary circle $\partial R$ is composed of two vertical line segments
and two horizontal line segments.
 We call $S \cap R$ a {\it black box} in $S$
if $\partial R \cap C = \emptyset$,
the vertical lines of $\partial R$ are disjoint from $S$,
the horizontal lines of $\partial R$ intersect $A$ transversely
in one or more points.
\end{definition}

\begin{figure}[htbp]
\begin{center}
\includegraphics[width=60mm]{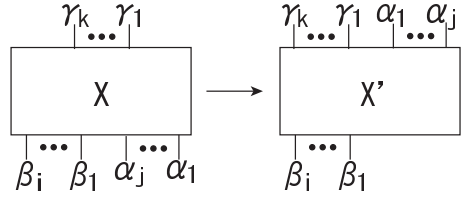}
\end{center}
\caption{Deformation in Lemma \ref{lemma:BlackBox}}
\label{fig:BlackBoxLemma}
\end{figure}

\begin{lemma}\label{lemma:BlackBox}
 Let $S, C, A, R$ be as in Definition \ref{definition:BlackBox} above. 
 Let $\tilde{R}$ be a rectangular disk
such that it forms a regular neighbourhood of $R$,
that its boundary circle $\partial{\tilde{R}}$
is composed of two vertical line segments and two horizontal line segments,
and that $S \cap (\tilde{R}-{\rm int}\,R)$ consists of vertical line segments,
say, $\gamma_1, \gamma_2, \cdots, \gamma_k$ from the right
above the top horizontal line segment of $\partial R$,
and $\alpha_1, \alpha_2, \cdots, \alpha_j, \beta_1, \beta_2, \cdots, \beta_i$
from the right
below the bottom horizontal line segment of $\partial R$
for some non-negative integers $i,k$ and a positive integer $j$.
 Suppose that there is no vertical line segment in $A$ 
connecting
one of $\gamma_1, \cdots, \gamma_k$
and one of $\alpha_1, \cdots, \alpha_j, \beta_1, \cdots, \beta_i$,
and that no pair among $\alpha_1, \cdots, \alpha_j, \beta_1, \cdots, \beta_i$ is contained 
in the same arc of type $\sqcup$.
 See Figure \ref{fig:BlackBoxLemma}.
 Then there is an ambient isotopy $H: {\mathbb R}^2 \times [0,1] \rightarrow {\mathbb R}^2$
with a homeomorphism $H_t : {\mathbb R}^2 \ni x \mapsto H_t(x)=H((x,t)) \in {\mathbb R}^2$
fot all $t \in [0,1]$
such that $H_t (R) = R$ and $H_t(\tilde{R})=\tilde{R}$ for all $t \in [0,1]$,
that $H_t (p) = p$ 
for all points $p$ in the left vertical line segments of $\partial R \cup \partial \tilde{R}$
and for all $t \in [0,1]$,
that $H_1(\alpha_m)$ is a vertical line segment
above the top horizontal line segment of $\partial R$
for all $m \in \{ 1, 2, \cdots, j \}$,
and that $H_1(S \cap R)$ is a black box
in some rectangular circles and arcs system.
\end{lemma}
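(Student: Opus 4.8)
The plan is to realize the ambient isotopy $H$ as a composition of moves that push the ``legs'' $\alpha_1,\dots,\alpha_j$ (the vertical line segments hanging below the bottom edge of $\partial R$) up and over the black box $S\cap R$, so that afterwards they emerge above the top edge of $\partial R$, while leaving the ``left anchor'' (the left vertical sides of $\partial R$ and $\partial\tilde R$) pointwise fixed and keeping both $R$ and $\tilde R$ invariant. First I would set up coordinates: normalize $\tilde R$ and $R$ to be concentric axis-parallel rectangles, list the finitely many vertical segments meeting $\tilde R-\mathrm{int}\,R$ in left-to-right order on the top edge ($\gamma_k,\dots,\gamma_1$) and on the bottom edge ($\beta_i,\dots,\beta_1,\alpha_j,\dots,\alpha_1$), and observe that by hypothesis none of the $\alpha_m$ is joined (inside $A$) to any $\gamma$-segment, and no two of the $\alpha$'s (or $\alpha$'s and $\beta$'s) lie on a common arc of type $\sqcup$. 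The key structural consequence, which I would isolate as the first step, is that each $\alpha_m$ is the lower end of a vertical segment that either (i) is a type-$\mathrm{I}$ arc whose other endpoint lies on a circle of $C$ entirely inside $\mathrm{int}\,R$, or (ii) is one of the two vertical segments of a type-$\sqcup$ arc whose horizontal segment and whose other vertical leg also lie inside $\mathrm{int}\,R$ --- in either case the continuation of $\alpha_m$ into the black box stays inside $R$.

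The main construction I would carry out in stages. Stage one: using an isotopy supported in a thin neighbourhood of the bottom edge of $\partial R$ and inside $\tilde R$, slide the feet of $\alpha_1,\dots,\alpha_j$ rightward past the feet of $\beta_1,\dots,\beta_i$ (this is possible because the $\beta$'s and $\alpha$'s are unlinked in the required sense, so there is no obstruction to commuting their crossing points along the edge; any $\sqcup$-arc among the $\beta$'s can be dragged along rigidly) until all of $\alpha_1,\dots,\alpha_j$ attach to the bottom edge of $\partial R$ to the right of every $\beta$. Stage two: enlarge the picture by using $\tilde R-R$ as a ``channel'': route the $j$ strands $\alpha_1,\dots,\alpha_j$, together with their continuations inside $R$, out through the right portion of the bottom edge, around the right vertical side of $\partial R$ inside the annular region $\tilde R-\mathrm{int}\,R$ (which is free of $C$ and where $S$ meets it only in the listed vertical segments), and back in through the right portion of the top edge of $\partial R$; since there is no vertical arc connecting an $\alpha$ to a $\gamma$, and the $\gamma$'s all sit to the left of where we are bringing the strands in, this rerouting can be done disjointly from $\gamma_1,\dots,\gamma_k$ and from the horizontal edges of $\partial R$. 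Stage three: straighten: inside $R$, apply a further isotopy (fixing $\partial R$, hence automatically fixing the left sides) making each $H_1(\alpha_m)$ a genuine vertical line segment emanating upward from the top edge of $\partial R$, and adjust so that $H_1(S\cap R)$ is again rectangular --- i.e.\ circles of $C$ inside $R$ are rectangles, type-$\mathrm{I}$ arcs are vertical, type-$\sqcup$ arcs have the prescribed three-segment form --- which is exactly the condition for $H_1(S\cap R)$ to be a black box in some rectangular circles and arcs system (the ambient system being obtained by closing up the strands outside $\tilde R$ arbitrarily, as in Definition \ref{definition:BlackBox}). Throughout, each stage is an isotopy of ${\mathbb R}^2$ that can be arranged to be the identity on the left vertical sides of $\partial R\cup\partial\tilde R$ and to preserve both $R$ and $\tilde R$ setwise, so composing them gives the desired $H$.

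The hard part will be Stage two: verifying that the $j$ strands --- each of which may carry an entire subconfiguration of $S\cap R$ behind it (a nested family of rectangular circles, type-$\mathrm{I}$ arcs ending on them, and type-$\sqcup$ arcs) --- can be pushed through the annular channel $\tilde R-\mathrm{int}\,R$ all at once, without any two strands obstructing each other and without meeting $\gamma_1,\dots,\gamma_k$. Here the hypotheses do the real work: ``no vertical arc joins a $\gamma$ to an $\alpha$ or $\beta$'' guarantees that the piece of $S\cap R$ trailing the $\alpha$'s is a union of components of $S\cap R$ disjoint from whatever the $\gamma$'s are attached to, so it can be carried as a rigid block; ``no two of $\alpha_1,\dots,\alpha_j,\beta_1,\dots,\beta_i$ lie on the same $\sqcup$-arc'' rules out the one configuration (a $\sqcup$ with both legs among the exiting strands) that could force a strand to cross itself or a sibling during the reroute. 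I would make this precise by an induction on $j$: peel off $\alpha_1$ (the leftmost, now rightmost after Stage one) together with its block, reroute it, and note that the remaining configuration still satisfies the lemma's hypotheses with $j$ replaced by $j-1$, iterating until all strands are on top. A minor technical point to handle carefully is bookkeeping of left-to-right order so that after the reroute the strands come back in an order compatible with ``$H_1(S\cap R)$ rectangular,'' but this is only a matter of composing with a further order-correcting isotopy inside $R$, supported away from $\partial R$.
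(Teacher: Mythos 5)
Your Stage two is not available under the stated constraints, and this is where the whole difficulty of the lemma lives. The isotopy is required to satisfy $H_t(R)=R$ and $H_t(\tilde R)=\tilde R$ for \emph{all} $t$, so $H_t(\mathrm{int}\,R)=\mathrm{int}\,R$: points of $S\cap\mathrm{int}\,R$ can never leave $R$, and points outside $R$ can never enter it. Hence you cannot take the strands ``together with their continuations inside $R$'' (let alone the nested circles and arcs trailing behind them) out through the bottom edge, around the right side inside the annulus $\tilde R-\mathrm{int}\,R$, and back in through the top. Only the material already lying in $\tilde R-\mathrm{int}\,R$ (the segments $\alpha_m$ themselves) may travel around the channel; the continuation of each $\alpha_m$ must be re-routed \emph{inside} $R$, with its intersection point with $\partial R$ sliding along $\partial R$ from the bottom edge to the top edge. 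Clearing a path inside the box for that slide is the actual content of the lemma, and your proposal does not address it.

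A second, related gap is the claim that the hypotheses make the trailing configuration ``a union of components of $S\cap R$ disjoint from whatever the $\gamma$'s are attached to, so it can be carried as a rigid block.'' The hypothesis only forbids a single \emph{vertical segment} running from a $\gamma$ to an $\alpha$ or $\beta$, i.e.\ a strand crossing the whole box; it does not prevent the circle $Z\subset R$ on which $\alpha_m$'s arc ends from also carrying arcs that exit through the top edge (as $\gamma$'s), through the bottom edge (as $\beta$'s), or that attach to other circles nested inside the box. So the block is in general anchored to things you cannot move with it, and the rigid-block/induction-on-$j$ scheme breaks down. The paper instead reduces to $j=1$ (this reduction you essentially share) and then runs a double induction, on the number $n_C(X)$ of circles in the box and on the number $n_e$ of arc-endpoints lying on the portion of the bottom edge of $Z$ to the right of the attachment point: each obstruction is removed by a local move (converting between type I and type $\sqcup$, shrinking, straight-merge--like deformations) or by applying the lemma recursively to a smaller black box ($X_r$ or $X_b$), until the endpoint can be dragged around the right side of $Z$ and the arc pulled out through the top edge of $\partial R$. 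Some mechanism of this kind, replacing your Stage two, is indispensable.
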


\begin{remark}
 This lemma does not mention deformation of $S$ outside $\tilde{R}$
which should occur accompanied by the deformation within $\tilde{R}$.
 The isotopy in this lemma may not keep $S$ rectangular.
\end{remark}

\begin{proof}
 We say that
an ambient isotopy $H: {\mathbb R}^2 \times [0,1] \rightarrow {\mathbb R}^2$ is {\it good}
if $H_t (R) = R$ and $H_t(\tilde{R})=\tilde{R}$ for all $t \in [0,1]$,
and $H_t (p) = p$
for all points $p$ in the vertical line segments of $\partial R \cup \partial \tilde{R}$
and for all $t \in [0,1]$.

 We can move the circles and arcs system $S$
by a good ambient isotopy of ${\mathbb R}^2$
so that the rectangular circles of $C$ are thinned in the vertical direction,
and no pair of rectangular circles of the same depth overlap under $\pi_y$,
since the arcs of $A$ intersects $C$
in endpoints of vertical line segments in $A$.

 It is enough to show this lemma in the case $j=1$.
 Applying the result for the case $j=1$ repeatedly, 
we obtain the desired conclusion also for the case $j>1$.

 Set $X = S \cap R$, the black box.
 The vertical line segment of $A$ containing $\alpha_1$ has an endpoint $p$
in the bottom horizontal line segment $b$ 
of some circle $Z$ of $C$ in $X$.
 The point $p$ divides $b$ into two segments,
one of which, say $b_r$, lies in the right side of $p$.
 Let $t$ be the top horizontal line segment of $Z$.

\begin{figure}[htbp]
\begin{center}
\includegraphics[width=120mm]{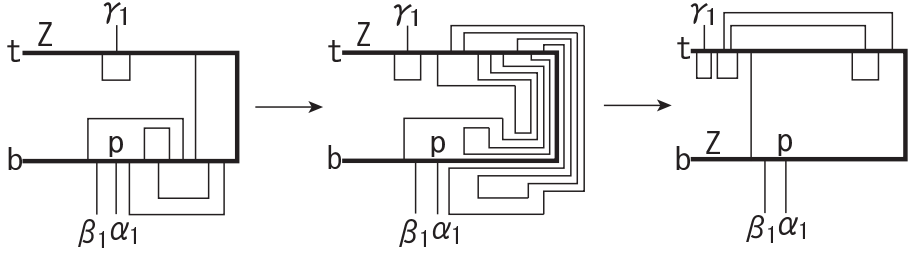}
\end{center}
\caption{The case $c(X)=1$}
\label{fig:nCX1}
\end{figure}

 Let $n_C(X)$ denote the number of circles of $C \cap X$.
 The proof proceeds by induction on $n_C(X)$.
 We first consider the case $n_C(X)=1$. 
 Then $Z$ is the only circle of $C$ contained in $X$.
 We move arcs of $A$ by a good ambient isotopy of ${\mathbb R}^2$.
 If the interior of $b_r$ contains endpoints of arcs of $A$,
we move the arcs near the endpoints
along a subarc of $b_r$ and the right vertical line segment of $Z$
so that the endpoints are contained in the top horizontal line segment of $Z$.
 See Figure \ref{fig:nCX1}.
 Because $j=1>0$, 
no arc of $A$ outside $Z$ has 
an endpoint in $t$
and the other in $b-b_r$ or below the bottom horizontal line segment of $\partial R$.
 Any arc with both endpoints in $t$
cobounds a disk with a subarc of $t$ inside or outside $Z$.
 Then we move arcs of $A$
so that $A \cap R$ is a union of arcs of types I and $\sqcup$.
 There is no obstruction because $n_C(X)=1$.
 Thus we can assume
that int\,$b_r$ does not contain such an endpoint of $A$.
 Then we can move $\alpha_1$ so that it is above the top horizontal line segment of $Z$
as shown in Figure \ref{fig:nCX1a1},
which shows the lemma in the case of $n_C(X)=1$.

\begin{figure}[htbp]
\begin{center}
\includegraphics[width=120mm]{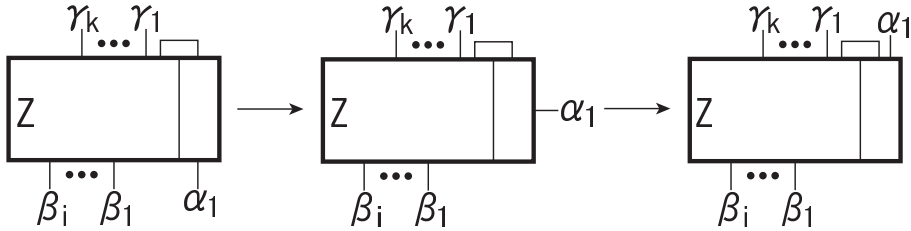}
\end{center}
\caption{Bring $\alpha_1$ above $t$}
\label{fig:nCX1a1}
\end{figure}

 We consider the case $n_C(X) > 1$.
 Let $y_t, y_b$ (resp. $\eta_t, \eta_b$) be 
the ordinates of the top and the bottom horizontal line segments of $Z$ (resp. $\partial R$),
and $x_l, x_r$ (resp. $\xi_l, \xi_r$)
the abscissae of the left and the right vertical line segments of $Z$ (resp. $\partial R$).
 Let $x_{\alpha}$ be the abscissa of $\alpha_1$.
 We consider rectangles
\newline 
$R_a=[\xi_l +\epsilon, \xi_r -\epsilon] \times [y_t +\epsilon, \eta_t -\epsilon]$
and
$R_b=[x_{\alpha} +\epsilon, \xi_r -\epsilon] \times [\eta_b +\epsilon, y_b -\epsilon]$,
and black boxes $X_a = S \cap R_a$ and $X_b = S \cap R_b$,
where $\epsilon$ is a small positive real number
such that there is no horizontal line segment in $S$ with ordinate
in the union of open intervals
$(\eta_b, \eta_b +\epsilon) \cup (y_b -\epsilon,y_b)
\cup (y_t, y_t +\epsilon) \cup (\eta_t -\epsilon, \eta_t)$
and there is no vertical line segment in $S$ with abscissa in
$(\xi_l, \xi_l +\epsilon) \cup (x_{\alpha}, x_{\alpha}+\epsilon)
\cup (\xi_r -\epsilon, \xi_r)$.

\begin{figure}[htbp]
\begin{center}
\includegraphics[width=130mm]{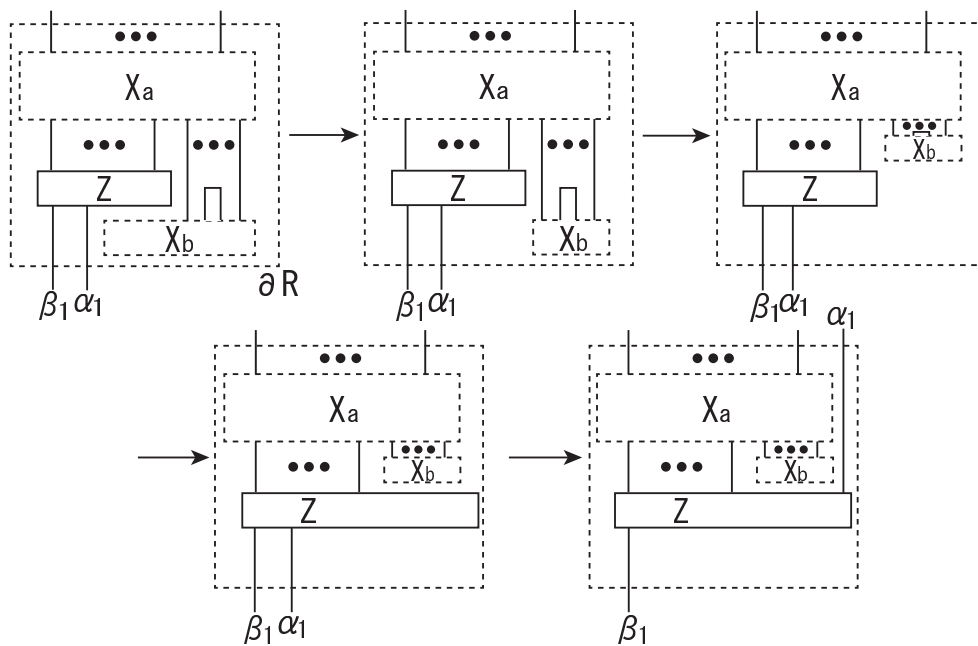}
\end{center}
\caption{The case where int\,$b_r$ is free from endpoints}
\label{fig:ne0}
\end{figure}

 We consider first the case
where int\,$b_r$ does not contain an endpoint of an arc of $A$ outside $Z$.
 In this case, 
the proof proceeds
by induction on the number, say $n_e$, of endpoints of arcs of $A$ contained in int\,$b_r$.
 Such arcs are inside $Z$.
 When $n_e=0$,
we deform $S$ as in Figure \ref{fig:ne0}.
 We shrink $X_b$ in the horizontal direction 
by a good ambient isotopy of ${\mathbb R}^2$
so that it is contained in $(x_r, \xi_r -\epsilon] \times [\eta_b + \epsilon, y_b - \epsilon]$.
 Then, shrinking the arcs connecting $X_b$ and $X_a$ and thinning $X_b$ in the vertical direction,
we move $X_b$ upward so that the ordinates of points in $X_b$ are within the interval $(y_t, y_t +\epsilon)$.
 Now, there is nothing in the right side of $Z$ within $R$.
 We lengthen $Z$ to the right direction
so that the abscissa of the right vertical line segment of $Z$
is a little larger than $\xi_r - \epsilon$.
 Then we can move the arc $\alpha_1$
so that it forms a vertical line segment above the top horizontal line segment of $Z$.

\begin{figure}[htbp]
\begin{center}
\includegraphics[width=80mm]{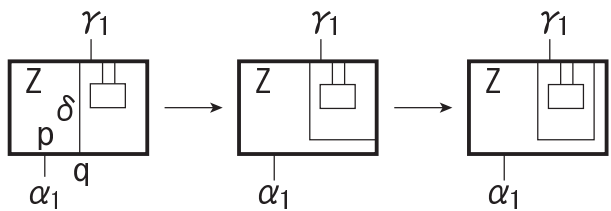}
\end{center}
\caption{The case where $\delta$ is of type I and with $\partial \delta$ in $Z$}
\label{fig:DeformTypeI}
\end{figure}

 We consider the case $n_e > 0$.
 Let $q$ be the rightmost endpoint among those of arcs of $A$ in int\,$b_r$,
and $\delta$ the arc with $q \in \partial \delta$.  
 Note that $\delta$ is inside $Z$.
 If $\delta$ is an arc of type I with its both endpoints in $Z$,
then we move $\delta$ near the point $q$
along a subarc of $b_r$ and the right vertical line segment of $Z$,
so that $\delta$ is deformed into an arc of type $\sqcup$.
 See Figure \ref{fig:DeformTypeI}.

\begin{figure}[htbp]
\begin{center}
\includegraphics[width=120mm]{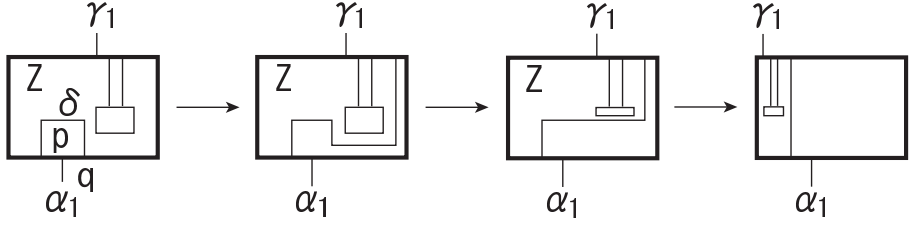}
\end{center}
\caption{The case where $\delta$ is of type $\sqcup$}
\label{fig:DeformTypeSqcup}
\end{figure}

 If $\delta$ is an arc of type $\sqcup$,
then 
we can move $\delta$ as shown in Figure \ref{fig:DeformTypeSqcup}
so that it forms an arc of type I.
 We deform $\delta$ near the point $q$
along a subarc of $b_r$ and the right vertical line segment of $Z$,
and then perform deformations similar to straight merges 
as described in the proof of Lemma \ref{lemma:s-merge}. 

\begin{figure}[htbp]
\begin{center}
\includegraphics[width=100mm]{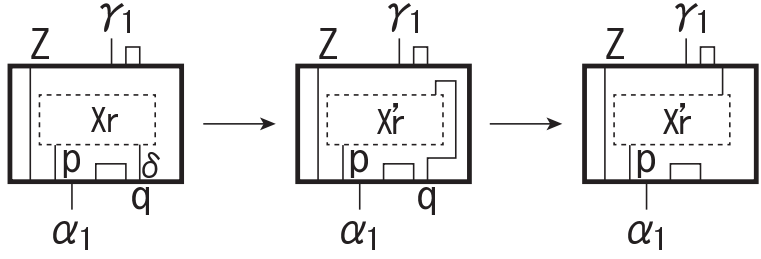}
\end{center}
\caption{The case where $\delta$ connects $Z$ and other circle of $C$}
\label{fig:DeformTypeI2}
\end{figure}

 In the remaining case,
$\delta$ has the other endpoint in a circle of $C$ other than $Z$.
 Let $R_Z$ be the rectangular disk bounded by $Z$ in ${\mathbb R}^2$.
 The arcs of type I with their both endpoints in $Z$ divides $R_Z$ into subdisks.
 Let $R'_r$ be the rightmost one
with $R'_r = [x_I, x_r] \times [y_b, y_t]$ for some real number $x_I$.
 If there are no such arc of type I, 
then we set $x_I = x_l$ and $R'_r = R_Z$.
 Then, set 
$R_r = [x_I +\epsilon', x_r -\epsilon'] \times [y_b +\epsilon', y_t -\epsilon']$,
where the positive real number $\epsilon'$ is taken to be small
so that there is no horizontal line segment in $S$ with ordinate in
$(y_b, y_b +\epsilon') \cup (y_t -\epsilon', y_t)$
and there is no vertical line segment in $S$ with abscissa in
$(x_I, x_I +\epsilon') \cup (x_r -\epsilon', x_r)$.
 Let $X_r = S \cap R_r$, the black box.
 If $X_r$ contains an arc $\lambda$ of type $\sqcup$ with its both endpoints in $b$,
then we deform the rectangle cobounded by $\lambda$ and a subarc of $b$
to be very thin in the vertical direction
so that $\lambda$ gets out from $X_r$. 
 See Figure \ref{fig:DeformTypeI2}.
 Note that $n_C(X_r) < n_C(X)$. 
 By the hypothesis of induction,
we can apply Lemma \ref{lemma:BlackBox} to $X_r$. 
 Then the black box $X_r$ is deformed to some black box, say $X'_r$,
and the arc $\delta$ is deformed to an arc
which connects $b_r$ and the top horizontal line segment of $\partial R_r$.
 Then $\delta$ can be deformed to a vertical line segment
connecting the top horizontal line segment of $Z$
and that of $\partial R_r$.

 In each of the three cases above we can decrease $n_e$,
and the lemma follows by induction.

\begin{figure}[htbp]
\begin{center}
\includegraphics[width=100mm]{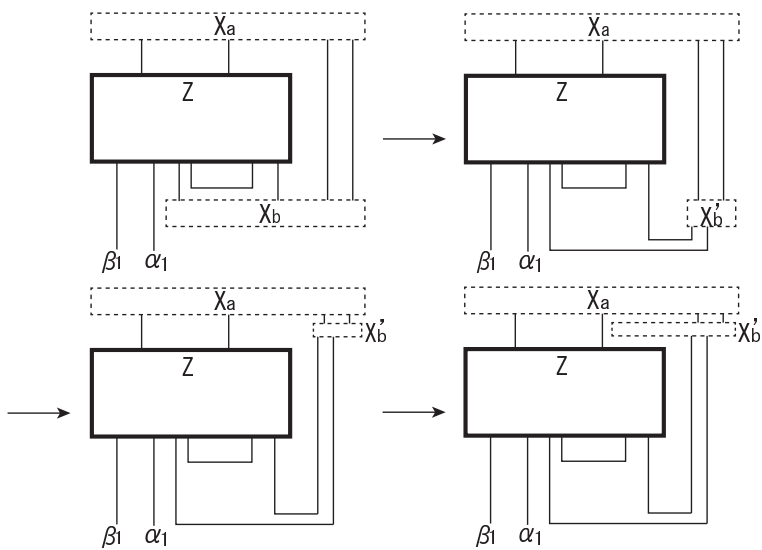}
\end{center}
\caption{The case where $\delta$ is outside $Z$}
\label{fig:outsideZ1}
\end{figure}

 We consider the case 
where the interior of $b_r$ contains an endpoint of an arc of $A$ outside $Z$.
 If $X_b$ contains an arc $\mu$ of type $\sqcup$ with its both endpoints in int\,$b_r$,
then we deform the rectangle cobounded by $\mu$ and a subarc of $b_r$
to be very thin in the vertical direction
so that $\mu$ gets out from $X_b$. 
 Note that $n_C(X_b) < n_C(X)$. 
 The hypothesis of induction allows us 
to apply Lemma \ref{lemma:BlackBox} to $X_b$
so that the vertical line segments connecting $b$ and the top line segment of $\partial R_b$
are deformed to be arcs connecting $b$ and the bottom line segment of $\partial R_b$. 
 Let $X'_b$ be the black box obtained from $X_b$ by this deformation.
 See Figure \ref{fig:outsideZ1}.
 We move $X'_b$ as in this figure.
 We shrink $X'_b$
so that the abscissa of one of the leftmost points of $X'_b$ is 
larger than that of the right vertical line segment of $Z$,
and lift $X'_b$ up
so that the ordinate of one of the bottom points of $X'_b$ is
larger than that of the top horizontal line segment of $Z$,
and then lengthen it
so that the abscissa of one of the leftmost points of $X'_b$ is
smaller than that of the right vertical line segment of $Z$.
 At this stage, the circles and arcs system is not rectangular
since the arcs
connecting $b$
and the bottom line segment of the boundary of the rectangle bounding $X'_b$
are neither of type I nor of type $\sqcup$. 
 We will deform them into arcs of type I one by one from now.

\begin{figure}[htbp]
\begin{center}
\includegraphics[width=120mm]{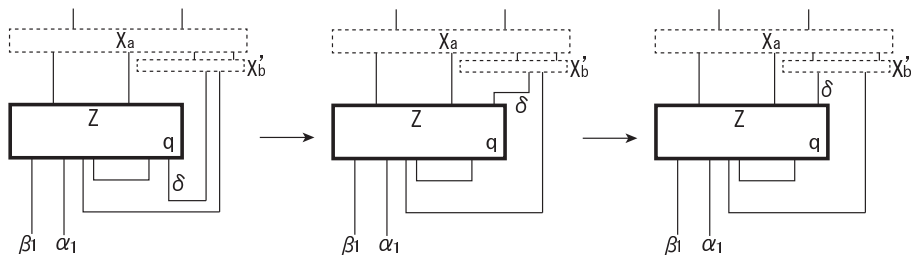}
\end{center}
\caption{The case where $\delta$ is outside $Z$ and of type I}
\label{fig:outsideZ2}
\end{figure}

 Let $q$ be the rightmost endpoint among those of arcs of $A$ in $b_r$.
 If $q$ is an endpoint of an arc inside $Z$,
then we perform one of deformations
as in Figures \ref{fig:DeformTypeI} through \ref{fig:DeformTypeI2} above.
 We consider the case where $q$ is an endpoint of an arc $\delta$ outside $Z$.
 If $\delta$ connects $Z$ and another circle of $C$,
then we can deform $\delta$ to be a vertical line segment connecting $Z$ and $X'_b$
as in Figure \ref{fig:outsideZ2}.
 The last deformation in this figure is similar 
to the straight merge described in the proof of Lemma \ref{lemma:s-merge}. 

\begin{figure}[htbp]
\begin{center}
\includegraphics[width=90mm]{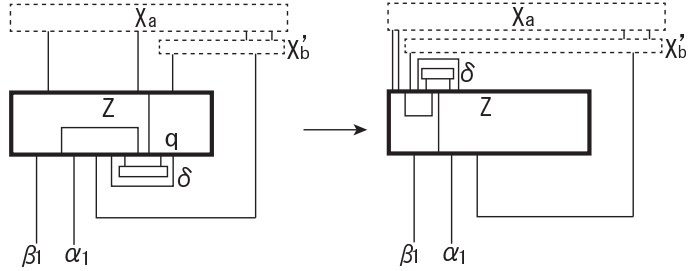}
\end{center}
\caption{The case where $\delta$ is outside $Z$ and of type $\sqcup$}
\label{fig:outsideZ3}
\end{figure}

 When $\delta$ has its both endpoints in $Z$ and is of type $\sqcup$,
a subarc of $b_r$ and $\delta$ cobound a disk, say $R_{\sqcup}$. 
 We shrink $R_{\sqcup}$ to be very small,
and move it along a subarc of $b_r$ and the right vertical line segment of $Z$
as in Figure \ref{fig:outsideZ3}.
 If the arc $R_{\sqcup} \cap b_r$ contains endpoints of arcs of $A$ inside $Z$, 
then we also deform inside $Z$
as in Figures \ref{fig:DeformTypeI} through \ref{fig:DeformTypeI2}.

 Repeating such deformations,
int\,$b_r$ becomes free from endpoints of $A$
and $S$ becomes rectangular again.
 Then, similarly to Figure \ref{fig:ne0},
we lengthen $Z$ to the right direction,
and move $\alpha_1$
so that it forms a vertical line segment above the top horizontal line segment of $Z$.
 This completes the proof of the lemma.
\end{proof}

\begin{proof}
 We prove Theorem \ref{theorem:RCAS}.
 The proof proceeds by induction on the number of arcs of $A$.
 If $A$ is empty, then the theorem is very clear.
 We assume that the theorem holds when $A$ consists of $n-1$ arcs,
and consider the case where $A$ consists of $n$ arcs.
 Let $A'$ be the union of arbitrary $n-1$ arcs of $A$.
 We can deform the circles and arcs system $S=C\cup A$
by an ambient isotopy of ${\mathbb R}^2$
so that $C \cup A'$ is rectangular,
and that the arc $\alpha = A-A'$ is composed
of $m$ vertical line segments and $m-1$ horizontal line segments
for some positive integer $m$.
 Note that $A \cap C\ (= \partial A)$ 
are endpoints of vertical line segments in $A$.
 Hence we can deform the rectangular circles of  $C$ very thin in the vertical direction
so that no pair of rectangular circles of the same depth overlap under $\pi_y$.

\begin{figure}[htbp]
\begin{center}
\includegraphics[width=80mm]{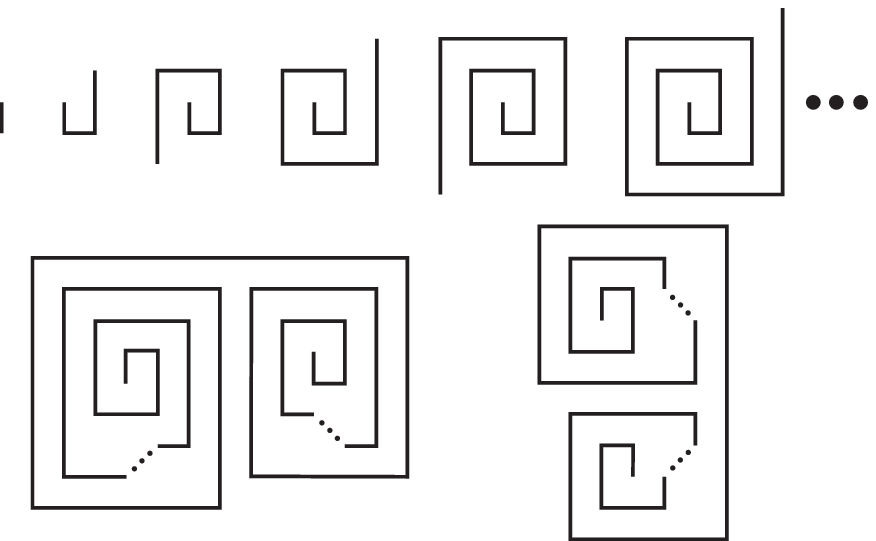}
\end{center}
\caption{$\alpha$}
\label{fig:n_th_arc}
\end{figure}

 If $\alpha$ has a vertical line segment (resp. horizontal line segment) $e$,
which has two horizontal line segments (resp. vertical line segments)
sharing an endpoint with $e$
in both sides of $e$,
then we can perform a deformation
similar to the straight merge operation in the proof of Lemma \ref{lemma:s-merge},
to decrease the number of line segments forming $\alpha$.
 Hence, without loss of generality, we assume
that $\alpha$ does not contain such a line segment
and is of one of the forms shown in Figure \ref{fig:n_th_arc}
or images of them
by a reflection in a vertical line or a horizontal line,
a rotation through $180^{\circ}$ or their composition. 
 Thus, if the arc $\alpha$ has its both endpoints in the same circle of $C$
and is inside the circle,
then $\alpha$ is already of type I or $\sqcup$,
and the theorem follows. 
 Hence we can assume
that either (1) $\alpha$ has its both endpoints in the same circle, say $Z$,
and is outside $Z$,
or (2) $\alpha$ connects distinct circles of $C$.

\begin{figure}[htbp]
\begin{center}
\includegraphics[width=90mm]{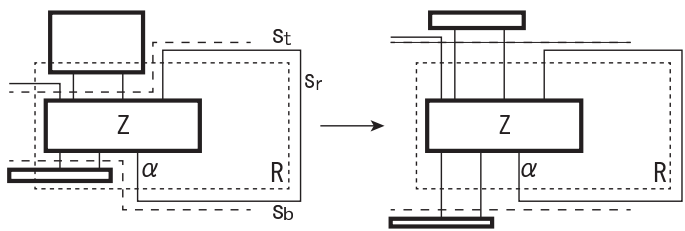}
\end{center}
\caption{get the circles intersecting $\partial R$ out of $R$}
\label{fig:GetCircleOut}
\end{figure}

 We consider first Case (1).
 Then $m=2, 3$ or $4$.
 If $m=2$, then $\alpha$ is of type $\sqcup$, and we are done.
 When $m=3$,
the arc $\alpha$ has two horizontal line segments,
say $s_t$ and $s_b$,
with their ordinates $t_{\alpha}, b_{\alpha}$ satisfying $t_{\alpha} > b_{\alpha}$.
 See Figure \ref{fig:GetCircleOut}. 
 We can assume, without loss of generality,
that the vertical line segment, say $s_r$, in $\alpha$ between $s_t$ and $s_b$
connects right endpoints of $s_t$ and $s_b$.
 Let $\ell_Z$ be the abscissa of the left vertical line segment of $Z$,
$r_{\alpha}$ the abscissa of the vertical line segment $s_r$,
and $R$ the rectangular disk 
$[\ell_Z - \epsilon, r_{\alpha}-\epsilon] \times [b_{\alpha}+\epsilon, t_{\alpha}-\epsilon]$
for a very small positive real number $\epsilon$.
 We take $\epsilon$
so that there is no vertical line segment with its abscissa
in $(\ell_Z - \epsilon, \ell_Z) \cup (r_{\alpha}-\epsilon, r_{\alpha})$,
and there is no horizontal line segment with its ordinate
in $(b_{\alpha}, b_{\alpha}+\epsilon) \cup (t_{\alpha} -\epsilon, t_{\alpha})$.

\begin{figure}[htbp]
\begin{center}
\includegraphics[width=80mm]{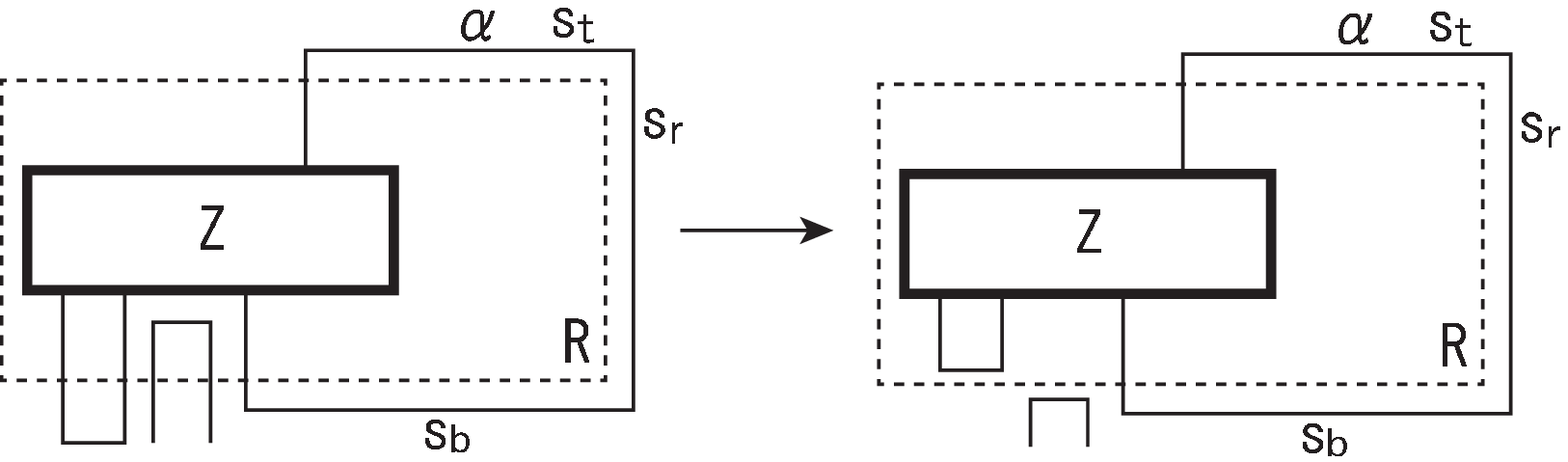}
\end{center}
\caption{get the arcs of type $\sqcup$ away from $\partial R$}
\label{fig:GetRidOfSqcup}
\end{figure}

 If $\partial R$ intersects circles of $C$ other than $Z$
or the left vertical line segment of $\partial R$ intersects horizontal line segments in $A$,
then we perform deformations 
at left parallel copies of the two vertical line segments
$\alpha-(s_b \cup s_r \cup s_t)-{\rm int}\,N(\partial \alpha)$
similar to the straight merge in the proof of Lemma \ref{lemma:s-merge}
to get such circles and horizontal line segments out of $R$,
where $N(\partial \alpha)$ is a very small regular neighborhood of $\partial \alpha$
in $\alpha$.
 See Figure \ref{fig:GetCircleOut}.
 If the bottom horizontal line segment of $\partial R$ intersects arcs of type $\sqcup$,
then we shrink them and what are surrounded by them and subarcs of $C$
in the vertical direction (upward or downward)
to cancel the intersection points.
 See Figure \ref{fig:GetRidOfSqcup}.
 
\begin{figure}[htbp]
\begin{center}
\includegraphics[width=125mm]{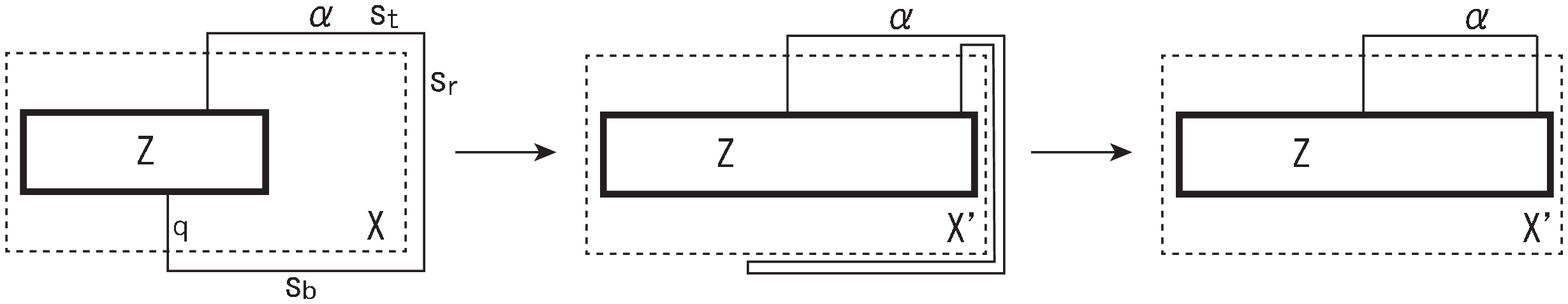}
\end{center}
\caption{Case (1), $m=3$}
\label{fig:Case1m3}
\end{figure}

 Let $q$ be the intersection point of $\alpha$
and the bottom horizontal line segment of $\partial R$.
 We can apply Lemma \ref{lemma:BlackBox} to the black box $X=S \cap R$, 
to bring $q$
to the top horizontal line segment of $\partial R$.
 See Figure \ref{fig:Case1m3}.
 Then we can deform $\alpha$ to an arc of type $\sqcup$,
and the theorem follows in this case.

 When $m=4$, 
a similar argument as above decreases $m$ to $3$,
and then the theorem follows by the above argument.
 See Figure \ref{fig:Case1m4}.

\begin{figure}[htbp]
\begin{center}
\includegraphics[width=125mm]{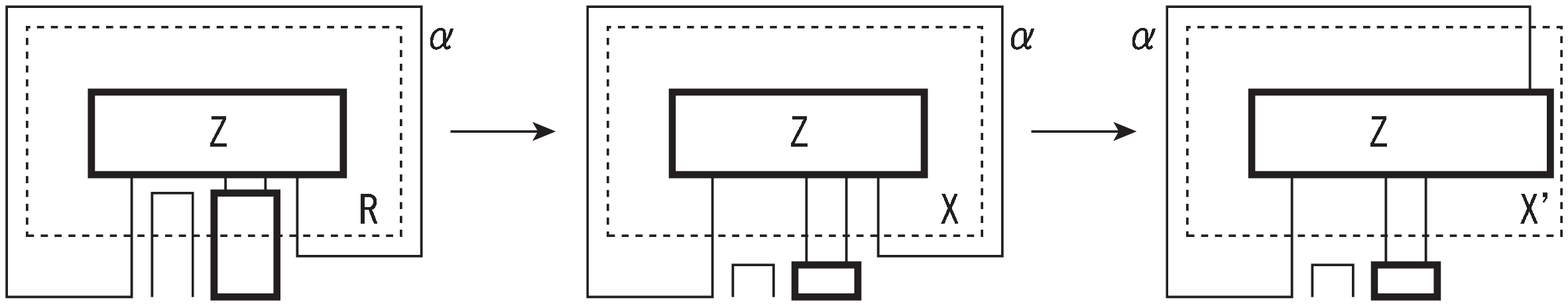}
\end{center}
\caption{Case (1), $m=4$}
\label{fig:Case1m4}
\end{figure}

 We consider Case (2).
 We proceed by induction
on the number $m$ of vertical line segments in $\alpha$.
 If $m=1$, then $S$ is rectangular, and we are done.
 Hence we assume that $m \ge 2$,
and that the theorem holds
if $\alpha$ has at most $m-1$ vertical line segments
as the hypothesis of induction.

 If the two circles of $C$ connected by $\alpha$ are of distinct depths,
then let $Z$ be the circle of larger depth.
 If $\alpha$ connects two circles of $C$ of the same depth,
then we take $Z$ as below.
 When $m \ge 3$,
let $Z$ be the circle of $C$
which contains one of the endpoints of  $\alpha$
whose abscissa is between abscissae of some two vertical line segments in $\alpha$.
 We can assume, without loss of generality,
that $\alpha$ has an endpoint
in the bottom horizontal line segment of $Z$
rather than in the top horizontal line segment.
 When $m=2$,
we can assume, without loss of generality, 
both endpoints of $\alpha$ are contained in the bottom horizontal line segment
of circles of $C$.
 In this case, 
let $Z$ be one of the circles connected by $\alpha$
such that the ordinate of the top horizontal line segment of $Z$
is smaller than
that of the bottom horizontal line segment of the other circle.
 (Recall that circles of the same depth do not overlap under $\pi_y$.)

\begin{figure}[htbp]
\begin{center}
\includegraphics[width=90mm]{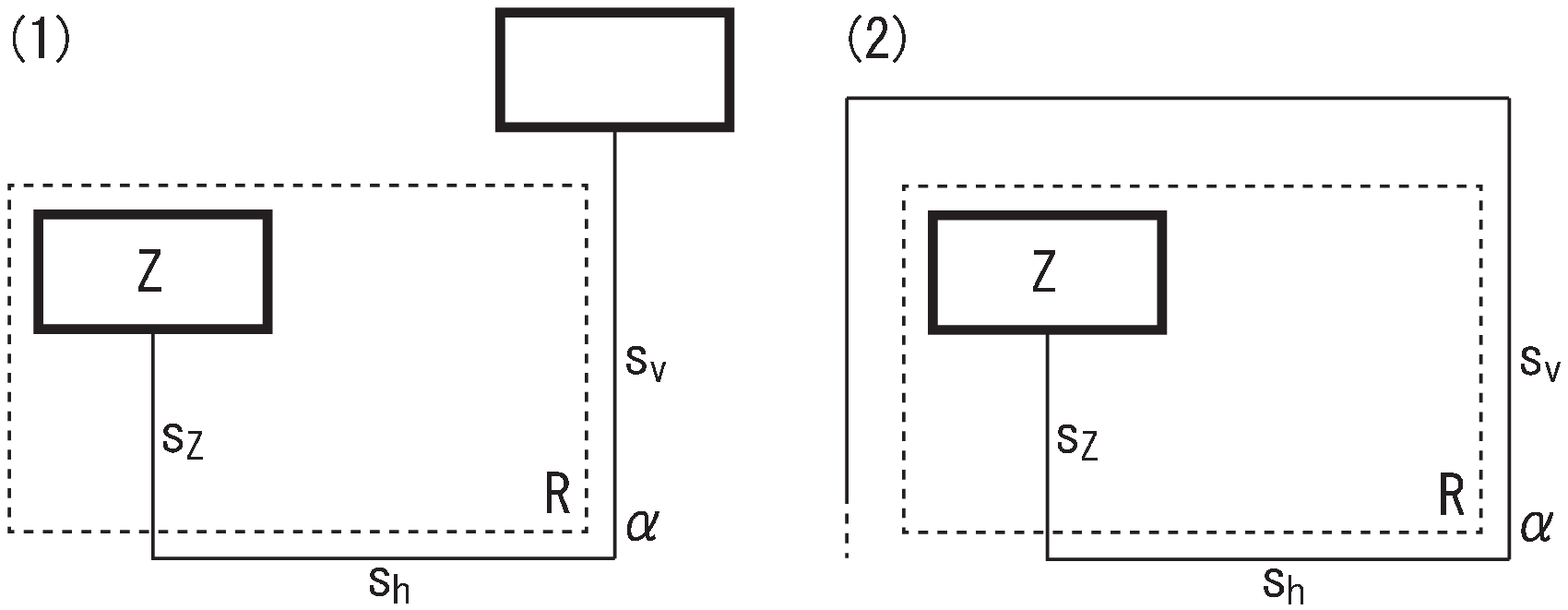}
\end{center}
\caption{Case (2)}
\label{fig:Case2}
\end{figure}

 Let $s_Z$ be the vertical line segment in $\alpha$
with one of its endpoints in $Z$,
$s_h$ the horizontal line segment in $\alpha$
which share an endpoint with $s_Z$,
and $s_v$ the vertical line segment in $\alpha$ other than $s_Z$
such that $s_v$ and $s_h$ share an endpoint.
 We assume, without loss of generality,
that $s_Z \cap s_h$ is the left endpoint of $s_h$.
 Let $x_1$ be the abscissa of the left vertical line segment of $Z$,
$x_2$ the abscissa of the vertical line segment $s_v$,
$y_1$ the ordinate of the horizontal line segment $s_h$,
$y_2$ the ordinate of the top horizontal line segment of $Z$,
and $R$ the rectangular disk
$[x_1-\epsilon, x_2-\epsilon] \times [y_1 + \epsilon, y_2 + \epsilon]$ in ${\mathbb R}^2$
for a very small real number $\epsilon$.
 We take $\epsilon$
so that $S$ has no vertical line segment with its abscissa
in $(x_1 - \epsilon, x_1) \cup (x_2-\epsilon, x_2)$,
and $S$ has no horizontal line segment with its ordinate
in $(y_1, y_1+\epsilon) \cup (y_2, y_2 +\epsilon)$.
 The top horizontal line segment and the right vertical line segment of $\partial R$
do not intersect a circle of $C$.
 See Figure \ref{fig:Case2}.
 If $\partial R$ intersects circles of $C$
or the left vertical line segment of $\partial R$ intersects horizontal line segments in $A$,
then we perform a deformation
at left parallel copy of $s_Z - {\rm int}\,N(\partial \alpha)$
similar to the straight merge as in the proof of Lemma \ref{lemma:s-merge},
to get such circles of $C$ and horizontal line segments out of $R$.
 If the bottom horizontal line segment of $\partial R$
intersects arcs of type $\sqcup$,
then we shrink them and what are surrounded by them and subarcs of $C$
in the vertical direction (upward or downward)
to cancel the intersection points.
 Then we can deform $S$ as shown in Figure \ref{fig:Case2deformation}.
 Precisely,
let $q$ be the intersection point $\alpha \cap \partial R$.
 We can apply Lemma \ref{lemma:BlackBox} to the black box $X=S \cap R$, 
to bring $q$
to the top horizontal line segment of $\partial R$.
 Note that $Z$ is lengthened to the right by the isotopy in the proof of Lemma \ref{lemma:BlackBox}.
 Then we can deform $\alpha$ to an arc with less number of vertical line segments.
 The theorem follows by the hypothesis of induction.
\end{proof}

\begin{figure}[htbp]
\begin{center}
\includegraphics[width=125mm]{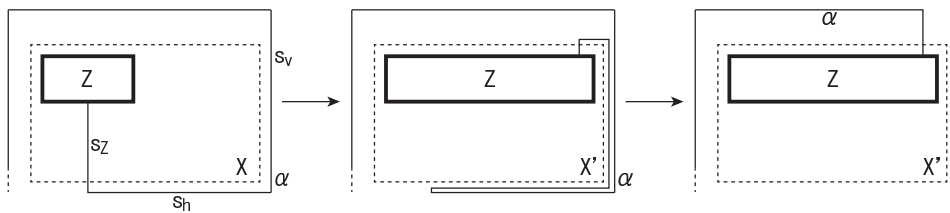}
\end{center}
\caption{deformation in Case (2)}
\label{fig:Case2deformation}
\end{figure}

\section*{Acknowledgments}
 The authors would like to thank Nobuya Satoh for helpful comments.
 He is the advisor of the first author's master's thesis
at Graduate School of Science, Rikkyo University.
 The second author is partially supported
by JSPS KAKENHI Grant Number 25400100.

\end{document}